\newtheorem{assumption}{Assumption}
 \journalname{}
\begin{document}

\title{Partition-based Distributionally Robust Optimization via Optimal Transport  with Order Cone Constraints
\thanks{This project has received funding from the European Research Council (ERC) under the European Union's Horizon 2020 research and innovation programme (grant agreement no. 755705). This work was also supported in part by the Spanish Ministry of Economy, Industry and Competitiveness and the European Regional Development Fund (ERDF) through project ENE2017-83775-P.}
}

\titlerunning{Partition-based DRO via Optimal Transport  with Order Cone Constraints}        

\author{Adri\'an Esteban-P\'erez        \and
        Juan M. Morales 
}


\institute{Adri\'an Esteban-P\'erez \at
              Department of Applied Mathematics,  University of M\'alaga, M\'alaga, 29071, Spain \\
              \email{adrianesteban@uma.es}           
           \and
           Juan M. Morales  \at
        Department of Applied Mathematics,  University of M\'alaga, M\'alaga, 29071, Spain \\
              \email{juan.morales@uma.es}           
}

\date{Received: date / Accepted: date}

\maketitle

\begin{abstract}
In this paper we wish to tackle stochastic programs affected by ambiguity about the probability law that governs their uncertain parameters. Using optimal transport theory, we construct an ambiguity set that exploits the knowledge about the distribution of the uncertain parameters, which is  provided by: i) sample data and ii) \emph{a-priori} information on the order among the probabilities that the true data-generating distribution  assigns to some regions of its support set.  This type of order is enforced by means of order cone constraints and can encode a wide range of information on the shape of the probability distribution of the uncertain parameters such as information related to monotonicity or multi-modality. We seek decisions that  are distributionally robust. In a number of practical cases, the resulting distributionally robust optimization (DRO) problem can be  reformulated as a finite convex problem  where the  \emph{a-priori} information  translates into linear constraints. In addition, our method inherits the finite-sample performance guarantees of the Wasserstein-metric-based DRO approach proposed by Mohajerin Esfahani and Kuhn (2018), while generalizing this and other popular DRO approaches. Finally, we have designed numerical experiments  to analyze the performance of our approach
with the newsvendor problem and the problem of a strategic firm competing \`{a} la Cournot in a market.
\keywords{Distributionally robust optimization \and Optimal transport \and
Wasserstein  metric \and Order cone constraints \and  Multi-item newsvendor
problem \and  Strategic firm}
\subclass{90C15 Stochastic programming \and 90C47 Minimax problems}
\end{abstract}

\section{Introduction}
\emph{Distributionally robust optimization}  (DRO) is a powerful modeling framework
for optimization under uncertainty that emerges from considering that the probability distribution of the  problem's uncertain parameters is, in itself, also uncertain. This gives rise to the notion of the \emph{ambiguity set}, that is, a set where the modeler assumes that the true distribution of the  problem's uncertain parameters is contained. The goal of DRO is therefore to find the decision maker's choice that is optimal against the worst-case probability distribution within the prescribed ambiguity set. Hence, DRO can be seen as a \emph{marriage} between stochastic programming and robust optimization, working with probability distributions as the former does, while hedging the decision-maker against the worst case as the latter typically aims to do. Since the work of \cite{Scarf1958}, many DRO models have been proposed and studied in the technical literature, especially over the last decade, in which DRO has attracted a lot of attention and become very popular in the field of optimization under uncertainty as an alternative to other paradigms.
We refer the reader to \cite{Keith2021,Rahimian2019} for recent surveys on DRO and optimization under uncertainty.
Naturally, the construction of the ambiguity set is key to the practical performance of DRO. It is no wonder, therefore, that much effort has been applied to this issue, resulting in several ways to specify and characterize the ambiguity set, namely:
\begin{enumerate}
    \item \emph{Moment-based approach:} The
    ambiguity
    set is defined as the set of all
    probability distributions whose moments  satisfy certain constraints; see,  \cite{Delage2010a,Gao2017a,Liu2018,Liu2019b,Mehrotra2014,Nakao2017a,Xin2021,Zymler2013}, to name a few.
    \item \emph{Dissimilarity-based approach:}  The
    ambiguity
    set is defined as the set of all probability distributions whose \emph{dissimilarity} to a prescribed distribution (often referred to as the \emph{nominal distribution}) is lower than or equal to a given value. Within this category, the choice of the \emph{dissimilarity} function leads to a wealth of distinct variants:
    \begin{enumerate}
        \item \emph{Optimal-transport-based (OTP) approach:} Here, we include the work in   \cite{Blanchet2021,Blanchet2019proc,Gao2016,MohajerinEsfahani2018,JMLRShafieezadeh-Abadeh},   among many others, all of which use, as the dissimilarity function, the well-known \emph{Wasserstein} distance, which exhibits some nice statistical convergence properties. Our work is also based on optimal mass transportation and consequently, it  falls within this category.
        \item \emph{$\phi$-divergences-based approach:} This class comprises all those approaches, which use
    $\phi$-divergences (such as the Kullback-Leibler divergence), for instance,
     \cite{Bayraksan2015,Ben-Tal2013,Namkoong2016}. We also include in this group the likelihood-based approaches, proposed by \cite{Duchi2021} and \cite{Wang2016}.
        \item \emph{Other measures of dissimilarity:} This category includes all
        other dissimilarity-based procedures for constructing ambiguity sets  than those already mentioned, such as those that utilize the family of $\zeta$-structure
        probability metrics (for example, the total variation metric, the Bounded
        Lipschitz metric ...), see, for example, the work in \cite{Rahimian2019mapr} and
       \cite{Zhao2015}, and the Prokhorov metric \cite{Erdogan2006}.
     \end{enumerate}
 \item \emph{Hypothesis-test-based approach:} The ambiguity set is made up of all those probability distributions which, given a data sample, pass a certain hypothesis test with a prescribed confidence level; see, for example, the work in \cite{Bertsimas2018a,Bertsimas2018b,Chen2019}.

\end{enumerate}


In the work we present here, we focus on ambiguity sets that are formulated by way of an optimal mass transportation problem. In fact, when the cost function in this problem is a metric, we recover the Wasserstein metric, which is indeed a metric for probability measures. According to \cite{Blanchet2021,Gao2016,MohajerinEsfahani2018}, the Wasserstein metric has  nice and interesting properties which make it a good choice in DRO,  compared to popular alternative choices such as $\phi$-divergences (see Sections 1.1 and 5.1 in \cite{Gao2016}, and the Introduction in \cite{MohajerinEsfahani2018} for a comparative analysis). Interestingly, the Wassertein distance offers a powerful theoretical framework to establish rates and guarantees of convergence. Furthermore, the conservatism implied by the ambiguity sets, built by means of the Wasserstein metric can be easily controlled, based on those rates.

Other cost functions can be used in the optimal mass transportation problem, but these do not generally result in a metric, which, most likely,  makes it much harder to establish rates of convergence and theoretical guarantees.

However, one disadvantage of using the Wasserstein metric is that the worst-case probability distribution may take the form of a Dirac distribution \cite{Yue2020}, which is implausible in practice. Ambiguity sets containing unrealistic distributions may result in overly conservative solutions, since  protection against these implausible distributions may require a decision that is more expensive than actually needed. Consequently, ambiguity sets that are solely based on a Wasserstein ball may lead to excessively costly solutions. In order to reduce the degree of conservatism, the authors in \cite{Gao2017a,Liu2021scheme,Wang2018,Yao2018} consider ambiguity sets that are formulated using the Wasserstein metric in conjunction with moment constraints.
Specifying these constraints, however, requires the estimation of the relevant parameters. Moreover, adding second-order moment information leads to semidefinite programs. In fact, as underlined in \cite{Liu2021scheme}, although the mixture of moment conditions and the Wasserstein metric allows the decision maker to  exclude pathological distributions and results in good out-of-sample performance, only in some special cases, e.g., when the objective function is piecewise linear with respect to the uncertain parameter, can the DRO problem  be reformulated as a tractable semidefinite program. For this reason, they propose a method to approximate the solution of DRO problems with ambiguity sets that are based on both moment conditions and the Wasserstein metric.

Our work follows the path of the work in \cite{Gao2017a,Liu2021scheme,Wang2018,Yao2018}: In an attempt to avoid overly conservative solutions, we seek to enrich the specification of Wasserstein ambiguity sets with \emph{a-priori} information on the true probability distribution of the problem's uncertain  parameters. Nonetheless, unlike the aforementioned approaches, we represent this information in the form of order cone constraints on the probability masses associated with a partition of the sample space. This has the advantage that the inclusion of such \emph{a-priori} information does not jeopardize the computational tractability of the underlying mathematical program. Our main contributions can be summarized as follows:
\begin{enumerate}
    \item In real-world decision-making problems, it is common to count on qualitative and expert information conveying some sense of order between the probabilities of occurrence of certain events.  For instance, in the multi-item newsvendor problem, the experienced decision maker may state that high demand values for a certain item are \emph{more likely} to occur than low ones.  This can occur, for example, when the true data-generating probability distribution is known or believed to be a mixture of distributions. In this case, determining the number of partitions in our DRO approach would be equivalent to estimating the number of components of the mixture. Indeed, one should use a number of partitions close to the number of distributions in the mixture. The task of inferring that number and the contribution of each component to the mixture is a relevant and well-known problem in statistics, which falls within the so-called realm of \emph{finite mixture models} (see,  Chapter 6 of \cite{McLachlan2000}).  In our approach, however, we assume that part of this inference task has already been done and so some of the inference results are available to the decision maker. Our aim is to exploit this type of qualitative information in the construction of the ambiguity set.  Most importantly, our DRO approach  protects the decision maker against the ambiguity in this inference process. For this purpose, we propose partitioning the support of the random parameter vector and bestow a partial order on (some of) the probability masses of the resulting subregions. This partial order can be described by a graph, which, in turn, can be associated with a convex cone. Consequently, the partial order can be embedded into the formulation of the ambiguity set in the form of conic constraints. The use of these types of cones is well known in the field of statistical inference with order restrictions (see,  \cite{NEMETH201680,Silvapulle2011}).

    \item As shown in the numerical tests, this partial order can be leveraged, among other things, to easily encode multi-modality using \emph{linear} constraints, as opposed to other approaches based on semidefinite programming  (see, for example, the work in \cite{Hanasusanto2015}), with the consequent benefit in terms of computational complexity. The recent papers \cite{Chen2019,Lam2017,Li2019mapr} consider ambiguity
sets with moment and generalized unimodal constraints. Our approach, however, can practically model a wider range of ``shapes'' beyond unimodality (see Subsection \ref{order_conic_section} for more details). 
    \item In addition to the order cone constraints on the probability masses linked to the different subregions
    of the partitioned sample space, these probability masses can  also be treated as random,
    with their probability distribution belonging to a certain ambiguity set. This way, our modeling framework extends the
    two popular DRO paradigms proposed by \cite{MohajerinEsfahani2018}, and \cite{Bayraksan2015,Ben-Tal2013}, respectively. Indeed,


    \begin{itemize}
        \item If we consider one partition only, that is, the entire sample space itself, there is no uncertainty about the associated probability mass (which is, evidently, equal to one) and no partial order can be established. If we now use a distance as the transportation cost function, our DRO framework reduces to that of \cite{MohajerinEsfahani2018}.
        \item On the contrary, in order to get the DRO framework of \cite{Bayraksan2015,Ben-Tal2013}, we just need to i) consider a number of partitions such that every partition contains a single data point from the sample, ii) assume that the distribution of their probability masses belongs to a $\phi$-divergence-based ambiguity set and iii) ignore any other information on the true probability distribution of the problem's uncertain  parameters (namely, partial order and ambiguity in the conditional distributions).

    \end{itemize}
    For their part, the authors in \cite{Chensim2020} have proposed  a different ambiguity set that also covers these two DRO approaches as special cases. However, their ambiguity set does not include the DRO framework we propose, as we note later.

    \item Under mild assumptions, we provide a tractable reformulation of our proposed DRO framework and show that it enjoys finite sample and asymptotic consistency guarantees.
    \item Finally, we numerically illustrate  the benefits in having \emph{a-priori} information by comparing our DRO framework with the well-known sample average approximation (SAA) solution and the Wasserstein metric-based approach of \cite{MohajerinEsfahani2018}. To this end, we consider the single and multi-item newsvendor problems and the problem of a strategic firm competing \`{a} la Cournot in a market.
\end{enumerate}

The rest of the paper is organized as follows.  Section \ref{ddro_section} includes some preliminaries to the optimal transport problem, we formulate the proposed DRO approach and present tractable reformulations. Convergence properties and performance guarantees are theoretically discussed in Section \ref{theory_section}. Section \ref{numerics} provides the results from numerical experiments. Finally, Section \ref{conclusions} concludes the paper.

\textbf{Notation}.
We  use $\overline{\mathbb{R}}$ to denote
the extended real line, and adopt the
conventions of its associated arithmetic. Furthermore, $\mathbb{R}_+$
denotes the set of non-negative real numbers.
We employ lower-case bold face
letters to represent vectors and bold face capital letters for matrices. We use $\textrm{diag}(a_1,\ldots, a_m)$ for a diagonal matrix of size $m \times m$ whose diagonal elements are equal to
$a_1, \ldots, a_m$. Moreover, given a matrix $\mathbf{M}$, its  transpose matrix will be written as $\mathbf{M}^T$.
We define
$\mathbf{e}$ as  the array with all its
components equal to $1$. The inner
product of two vectors $ \mathbf{u},
\mathbf{v} $ (in a certain space) is denoted  $\langle
\mathbf{u}, \mathbf{v}\rangle = \mathbf{u}^T
\mathbf{v}$.
Given any norm $\left\| \cdot
\right\|$ in the Euclidean space (of a given
dimension $d$), the dual norm is defined as
$\left\| \mathbf{u}
\right\|_{*}=\sup_{\left\| \mathbf{v}
\right\| \leqslant 1} \langle
\mathbf{u},\mathbf{v} \rangle$. Given a function $f: \mathbb{R}^d \rightarrow \overline{\mathbb{R}}$, we will say that $f$ is a proper function if $f(\mathbf{x}) < +\infty$ for at least one $\mathbf{x}$ and $f(\mathbf{x}) > -\infty$ for all $\mathbf{x} \in \mathbb{R}^d$. Additionally, the convex conjugate function of $f$, $f^*$, is defined as
$f^*(\mathbf{y}):= \sup_{\mathbf{x} \in \mathbb{R}^d} \langle \mathbf{y},\mathbf{x} \rangle -f(\mathbf{x})$. It is well known that if $f$ is a proper function, then $f^*$ is also a proper function.
Given a set $A \subseteq \mathbb{R}^d$, we  denote its relative interior as $\textrm{relint}(A)$. Similarly, we  refer to its interior as $\textrm{int}(A)$.
The support function of set $A$, $S_A$, is
defined as
$S_A(\mathbf{b}):= \sup_{\mathbf{a} \in A}
\langle \mathbf{b}, \mathbf{a} \rangle$. The dual cone $\mathcal{C}^*$ of a cone $\mathcal{C}$ is given by $\mathcal{C}^*:=\{ \mathbf{y} \; /\;  \langle \mathbf{y}, \mathbf{x} \rangle \geqslant 0, \; \forall \mathbf{x} \in \mathcal{C} \}$.
 We use the symbol $\delta_{\boldsymbol{\xi}}$ to represent the Dirac distribution supported on $\boldsymbol{\xi}$. In addition, we reserve the symbol ``$\;\widehat{}\;$'' for objects which are
  dependent  on the sample data. The symbols $\mathbb{E}$ and $\mathbb{P}$ denote, respectively, ``expectation'' and ``probability.'' Finally, for the rest of the paper we  assume that  we always have measurability for those objects, whose expected values we consider.

%



\section{Data-driven distributionally robust
optimization model}\label{ddro_section}

First, we briefly introduce some concepts from the \emph{optimal transport problem} (also known as the \emph{mass transportation problem}) that are at the core of  the development of our DRO framework.

Intuitively speaking, the optimal transport problem (OTP) centers on the question of how to
move masses between two probability distributions in such a way that the transportation
cost is minimal. Let $P$ and  $Q$  be two probability distributions in a
Polish space $S$ such that $P$
is the distribution of mass seen as the  origin
(i.e. the source) and $Q$ is the
distribution of mass seen as the
destination (i.e., the sink), and let $c$ be a measurable cost
function with $c(x,y)$ representing the cost
of moving a unit of mass from
location $x$ to location $y$. The OTP
can be stated as follows
 \begin{align*}
    C (P, Q)=\inf_{\Pi} \Big{\{ }   \int
    c(x,y) \Pi (dx, dy) :
     &\Pi \; \text{is a joint distribution}
       \;  \text{with marginals}
      \;P\;\text{and }\;  Q,\;  \\
      &\text{
      respectively} \Big{\} }   \nonumber
\end{align*}
We assume that the cost function $c$ is a
non-negative jointly convex lower
semicontinuous function such that if $x=y$  , then $c(x,y)=0 $. In the remainder of the paper we assume that we have existence and uniqueness of the OTP (see, for example,  Theorem 4.1 in \cite{Villani2008}).

For more technical
details about the assumptions
on the cost function in the OTP, we refer to
\cite{Villani2008} and
\cite{Santambrogio2015}. Note that if we choose a distance on $S$ as the cost function (for example, a $p$-norm, with $p \geq 1$, if $S$ is the Euclidean space $\mathbb{R}^{n}$), we get the so-called \emph{Wasserstein} metric of order 1, which we represent as $\mathcal{W}(P,Q)$ and which is also known as the Kantorovich metric.

It is
well known that this probability distance
metrizes the weak convergence property. Furthermore,
convergence with respect to the Wasserstein
metric of order 1 is
equivalent to weak convergence plus
convergence of the first  moment.
Wherever the Wasserstein metric of order 1 is used in this paper, we implicitly consider the set of all probability distributions with finite moment of order $1$. Likewise, we refer to the
\emph{Wasserstein ball of radius $r\geqslant
0$ centered at a certain nominal probability
distribution $P_0$ }, which we denote by
$\mathbb{B}_r(P_0)$, as the set of all
probability distributions whose Wasserstein
metric of order 1 to $P_0$ is
at most $r$.

\subsection{Formulation of the proposed model}
Problem (P) below formulates the data-driven distributionally robust optimization (DDRO) framework we propose.
\begin{subequations}
\begin{align}
\text{(P)}\;\;\inf_{\mathbf{x}\in X} \sup_{ Q \in \mathcal{Q}}& \  \mathbb{E}_{Q} \left[f(\mathbf{x},\boldsymbol{\xi}) \right] \label{OF_ini}\\
 \text{s.t.} \;&   \mathbb{P}_Q \left[ \boldsymbol{\xi} \in \Xi_i \right]=p_i, \forall i \in \mathcal{I}  \label{contraint_prob}\\
&  \widetilde{c}(\mathbf{p}-\widehat{\mathbf{p}})\leqslant \rho  \label{constraint_total_variation_p}  \\
& \sum_{i \in \mathcal{I}}p_i C(Q_i,\widehat{Q}_i)\leqslant \varepsilon\label{constraint_ball}\\
& Q_i \in \mathcal{Q}_i, \forall i \in \mathcal{I} \label{constraint_ball_2}\\
& \mathbf{p} \in \Theta \label{constraint_ball_3}
\end{align}
\end{subequations}
where   $X\subseteq \mathbb{R}^n$ is the set of feasible decisions, $\boldsymbol{\xi}: \Omega
\rightarrow \Xi \subseteq \mathbb{R}^d$   is a random vector defined on the
measurable space $(\Omega, \mathcal{F})$  with
$\sigma$-algebra $\mathcal{F}$, and $\mathcal{Q}$  is the set of all probability distributions over the
measurable space $(\Omega, \mathcal{F})$. Moreover,
for each $i \in \mathcal{I}$, $Q_i$ is the conditional distribution of $Q$ given
$\boldsymbol{\xi} \in \Xi_i$, that is $Q_i=Q(\boldsymbol{\xi}  \; /\; \boldsymbol{\xi} \in \Xi_i) \in
\mathcal{Q}_i$, with $\mathcal{Q}_i$ being the set of all conditional probability
distributions of $Q$ given $\boldsymbol{\xi} \in \Xi_i$.    In this setting, $\mathcal{I}$ is the
set of regions $\Xi_i$ with pairwise disjoint interiors into which the support set $\Xi$ is partitioned,  that is,
 $\bigcup_{i \in \mathcal{I}}(\Xi_i)= \Xi$ and $\text{int}(\Xi_i)\bigcap\text{int}(\Xi_j)=\emptyset
 $, $\forall i, j \in \mathcal{I}, i \neq j$.  Furthermore, we assume that $Q^*(\Xi_i \cap \Xi_j)=0 , \forall i, j \in \mathcal{I}, i \neq j$, where $Q^*$ is the true data-generating distribution. This is equivalent to stating that $\{  \Xi_i \}_{i \in \mathcal{I}}$ constitutes a
$Q^*$-packing (see a formal definition of this concept in page 50 of \cite{Graf2000}) and will allow us to unequivocally assign samples from $Q^*$ to the partitions $\Xi_i, i \in \mathcal{I}$. Finally, constraint~\eqref{constraint_total_variation_p}   defines the set of all probability vectors $\mathbf{p}$  that differ from the nominal empirical probability vector $\widehat{\mathbf{p}}$ in at most $\rho$ according to the cost function $\widetilde{c}$. This is a function that quantifies how  dissimilar two probability vectors $\mathbf{p}$ and $\mathbf{q}$ are. For this purpose, we require that $\widetilde{c}$ be  a
non-negative jointly convex lower
semicontinuous function such that if $\mathbf{p}=\mathbf{q}$, then $\widetilde{c}(\mathbf{p},\mathbf{q})=0 $. As mentioned further on, function $\widetilde{c}$ could, for example, take the form of a norm or a $\phi$-divergence.
 To ease the notation and the formulation, we  use
 $\boldsymbol{\xi}$
to represent either the random vector $\boldsymbol{\xi}(\omega)$,  with $ \omega \in \Omega$
 or an element of
$\mathbb{R}^d$. Note that we can consider the probability
measure induced by the random vector $\boldsymbol{\xi}$, if we choose
the  corresponding Borel $\sigma$-algebra $\mathcal{B}$ on $\Xi$. Thus, we can see $\mathcal{Q}$ as a  set of
probability measures defined over $(\Xi, \mathcal{B})$, so  we  write $\mathcal{Q}=\mathcal{Q}(\Xi)$. We define the uncertainty set $\mathcal{P}$ for the probability vector $\mathbf{p}
\in \mathbb{R}^{|\mathcal{I}|}$, with $|\mathcal{I}|$ being the number of partitions, as the intersection of $\Theta$ and the set defined by constraint~\eqref{constraint_total_variation_p}.
The support set $\Theta$, which includes the order cone constraints on the probability masses $\mathbf{p}$, is given by:
\begin{equation}\label{Theta_def}
    \Theta=\{ \mathbf{p}
\in \mathbb{R}^{|\mathcal{I}|} : \langle \mathbf{e},\mathbf{p}\rangle=1, \mathbf{p}\in \mathbb{R}^{|\mathcal{I}|}_{+}, \mathbf{p}\in \mathcal{C}  \}
\end{equation}
where $\mathcal{C}$ is a proper  (convex,
closed, full and pointed) cone. Hence, $\Theta $ is a  convex compact set. \\\\
In problem (P), $\rho$ and $\varepsilon$ are non-negative parameters, to be tuned by the decision maker, which control the size of the ambiguity set defined by equations \eqref{contraint_prob}--\eqref{constraint_ball_3}.

We represent this set as $\mathcal{U}_{\rho,\varepsilon} (\widehat{Q})$, where $\widehat{Q}$ is a nominal distribution expressed in terms of $\widehat{\mathbf{p}}$ and $\widehat{Q}_i$ as
\begin{equation}
 \widehat{Q}= \sum_{i \in \mathcal{I}}{\widehat{p}_i \widehat{Q}_i}
\end{equation}
where
\begin{equation}
 \widehat{p}_i=\frac{N_i}{N+|I'|} \label{nominal_vector_masses}
\end{equation}
and
\begin{equation}
 \widehat{Q}_i =\frac{1}{N_i}\sum_{j=1}^{N_i} \delta_{\boldsymbol{\widehat{\xi}}_j^{\,\, i}}
\end{equation}
Additionally, $I'=\{i \in \mathcal{I}\  \textrm{such that partition} \; i \textrm{ does not contain any data from the sample} \}$,
$\boldsymbol{\widehat{\xi}}_j^{\,\, i}\in \{\boldsymbol{\widehat{\xi}}_1^{\,\, i}, \ldots,
\boldsymbol{\widehat{\xi}}_{N_i}^{\,\, i} \}$ and $N_i$ is the number of atoms in region $\Xi_i$. Here we set $N_i=1$ and
$\boldsymbol{\widehat{\xi}}^{\,\, i}_1:=\arg \sup_{\boldsymbol{\xi}\in \Xi_i}f(\mathbf{x},\boldsymbol{\xi})$ for those $i \in I'$. Implicitly, we assume that this supremum is attained. We remark that this modeling choice protects the decision maker in those cases where there is a
total absence of information on the conditional distributions $Q_i, i \in I'$. Indeed, by introducing the ``artificial'' data point $\boldsymbol{\widehat{\xi}}^{\,\, i}_1:=\arg \sup_{\boldsymbol{\xi}\in \Xi_i}f(\mathbf{x},\boldsymbol{\xi})$ in a partition $\Xi_i$ with no samples, we are considering the worst-case form that the true conditional distribution $Q_i$ could possibly take, that is, a Dirac distribution supported on $\boldsymbol{\widehat{\xi}}^{\,\, i}_1$.

Finally, we note that the ambiguity set defined by constraints \eqref{contraint_prob}--\eqref{constraint_ball_3} is unequivocally determined by specifying the partitions $\Xi_i, i \in \mathcal{I}$, the nominal distribution $\widehat{Q}$, the budgets $\rho$ and $\varepsilon$, and the order cone constraints $\mathbf{p} \in \mathcal{C}$ in \eqref{Theta_def}. In fact, if these constraints are removed and we set $\rho=\varepsilon = 0$, then we have $p_i = \widehat{p}_i$ and $Q_i =\widehat{Q}_i, \forall i$, and therefore, $Q = \widehat{Q}$.

The following theorem shows that problem (P) can be reformulated as a single-level problem.
\begin{theorem}[Reformulation based on strong duality]
For any non-negative values of parameters $\varepsilon, \rho$, problem (P) is equivalent to the following:
\begin{align}
   \text{(P0)} \; \inf_{\mathbf{x},\lambda,\boldsymbol{\mu},\eta\;\widetilde{\mathbf{p}}, \theta, \mathbf{t} }& \lambda \rho+ \eta+\theta\varepsilon+\lambda \widetilde{c}^*_{\mathbf{\widehat{p}}}\left(\frac{\left(\frac{1}{N_i}\sum_{j=1}^{N_i}t_{i,j}\right)_{i \in \mathcal{I}}+\boldsymbol{\mu}-\eta
\mathbf{e}+\widetilde{\mathbf{p}}}{\lambda} \right)\notag\\
    \text{s.t.} \enskip & t_{i,j} \geqslant \sup_{\boldsymbol{\xi}
      \in \Xi_i}\left[f(\mathbf{x},\boldsymbol{\xi})-\theta
      c(\boldsymbol{\xi}, \widehat{\boldsymbol{\xi}}^{\, i}_j)\right],
      \;\forall i \in \mathcal{I}, \;j \leqslant N_i \label{P1_c}\\
      & \mathbf{x}\in X,\lambda \geqslant 0,\boldsymbol{\mu}\in \mathbb{R}^{|\mathcal{I}|}_{+},\eta \in  \mathbb{R},\;\widetilde{\mathbf{p}}\in
\mathcal{C}^*,\theta \geqslant  0 \notag\\
& t_{i,j} \in \mathbb{R}, \forall i \in \mathcal{I},  j \leqslant N_i \notag
\end{align}
where $\widetilde{c}^{*}_{\mathbf{\widehat{p}}}(\cdot)$ is the convex conjugate function of $\widetilde{c}(\cdot, \mathbf{\widehat{p}})$, with $\mathbf{\widehat{p}}$ fixed, and $\left(\frac{1}{N_i}\sum_{j=1}^{N_i}t_{i,j}\right)_{i \in \mathcal{I}}$ is the vector with the $|\mathcal{I}|$ components $\frac{1}{N_i}\sum_{j=1}^{N_i}t_{i,j}$.
\end{theorem}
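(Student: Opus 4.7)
The plan is to derive (P0) by applying Lagrangian/Fenchel duality to the inner worst-case expectation in two stages: first over the conditional distributions $\{Q_i\}_{i\in\mathcal{I}}$, and then over the vector of probability masses $\mathbf{p}$. The starting point is the law-of-total-expectation decomposition $\mathbb{E}_Q[f(\mathbf{x},\boldsymbol{\xi})]=\sum_{i\in\mathcal{I}} p_i\,\mathbb{E}_{Q_i}[f(\mathbf{x},\boldsymbol{\xi})]$, which, together with \eqref{contraint_prob}--\eqref{constraint_ball_3}, rewrites the inner maximization as a joint problem in $(\mathbf{p},\{Q_i\})$ with a dissimilarity budget $\rho$ on $\mathbf{p}$ and an aggregated transport budget $\varepsilon$ on $\sum_i p_iC(Q_i,\widehat{Q}_i)$.

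First I would dualize the aggregated transport constraint with a nonnegative multiplier $\theta$. Crucially, the resulting Lagrangian $\sum_i p_i\bigl[\mathbb{E}_{Q_i}[f]-\theta\,C(Q_i,\widehat{Q}_i)\bigr]+\theta\varepsilon$ decouples across partitions, because the nonnegative masses $p_i$ act as scaling factors in front of sub-problems that no longer interact. For each fixed $i$, the inner maximization $\sup_{Q_i\in\mathcal{Q}_i}\mathbb{E}_{Q_i}[f]-\theta\,C(Q_i,\widehat{Q}_i)$ is a standard Wasserstein-type DRO problem with the discrete reference measure $\widehat{Q}_i=\tfrac{1}{N_i}\sum_{j=1}^{N_i}\delta_{\widehat{\boldsymbol{\xi}}^{\,i}_j}$ restricted to $\Xi_i$, to which I would apply the strong OT duality result of \cite{MohajerinEsfahani2018} (equivalently \cite{Gao2016,Blanchet2017}) to obtain $\tfrac{1}{N_i}\sum_{j=1}^{N_i}\sup_{\boldsymbol{\xi}\in\Xi_i}\bigl[f(\mathbf{x},\boldsymbol{\xi})-\theta\,c(\boldsymbol{\xi},\widehat{\boldsymbol{\xi}}^{\,i}_j)\bigr]$. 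Introducing epigraph variables $t_{i,j}$ to linearize these inner suprema recovers exactly constraint~\eqref{P1_c}.

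The outer problem in $\mathbf{p}$ then reduces to maximizing the linear objective $\sum_i p_i\bar t_i$, with $\bar t_i:=\tfrac{1}{N_i}\sum_j t_{i,j}$, over $\mathbf{p}\in\mathcal{C}$ satisfying $\widetilde c(\mathbf{p},\widehat{\mathbf{p}})\leqslant\rho$, $\langle\mathbf{e},\mathbf{p}\rangle=1$, and $\mathbf{p}\geqslant\mathbf{0}$. Attaching multipliers $\lambda\geqslant 0$, $\eta\in\mathbb{R}$, and $\boldsymbol{\mu}\geqslant\mathbf{0}$ to these three constraints respectively, and keeping $\mathbf{p}\in\mathcal{C}$ implicit, the inner supremum collapses to $\sup_{\mathbf{p}\in\mathcal{C}}\langle\bar{\mathbf{t}}+\boldsymbol{\mu}-\eta\mathbf{e},\mathbf{p}\rangle-\lambda\widetilde c(\mathbf{p},\widehat{\mathbf{p}})$. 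Applying the Fenchel conjugate-of-a-sum formula to $\lambda\widetilde c(\cdot,\widehat{\mathbf{p}})$ and the indicator $I_{\mathcal{C}}$, whose conjugate equals the support function of $\mathcal{C}$ and vanishes precisely on $-\mathcal{C}^*$, this supremum equals $\inf_{\widetilde{\mathbf{p}}\in\mathcal{C}^*}\lambda\,\widetilde c^{*}_{\widehat{\mathbf{p}}}\bigl((\bar{\mathbf{t}}+\boldsymbol{\mu}-\eta\mathbf{e}+\widetilde{\mathbf{p}})/\lambda\bigr)$, which, combined with the constants $\lambda\rho+\theta\varepsilon+\eta$ and the outer $\inf_{\mathbf{x}\in X}$, reproduces the objective of (P0).

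The main obstacle is certifying a zero duality gap at both Lagrangian steps. For the transport stage, strong duality hinges on the standing assumptions on $c$ (nonnegative, jointly convex, lower semicontinuous, vanishing on the diagonal) together with the atomic structure of $\widehat{Q}_i$, which place us exactly in the setting of Theorem 4.2 in \cite{MohajerinEsfahani2018}. For the $\mathbf{p}$ stage, convexity and compactness of $\Theta$ and lower semicontinuity of $\widetilde c(\cdot,\widehat{\mathbf{p}})$ allow a standard Slater-type argument (the nominal $\widehat{\mathbf{p}}$ strictly satisfies $\widetilde c(\mathbf{p},\widehat{\mathbf{p}})\leqslant\rho$ whenever $\rho>0$); the boundary cases $\rho=0$ or $\varepsilon=0$ are accommodated by continuity of the perspective function $\lambda\,\widetilde c^{*}_{\widehat{\mathbf{p}}}(\cdot/\lambda)$ as $\lambda\to 0^+$.
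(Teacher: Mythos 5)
Your overall architecture is the same as the paper's: decompose the worst-case expectation via the law of total probability, dualize the aggregated transport budget with a multiplier $\theta\geqslant 0$ so that the conditional problems decouple and reduce, by strong duality of the per-partition optimal-transport/moment problem, to $\tfrac{1}{N_i}\sum_{j=1}^{N_i}\sup_{\boldsymbol{\xi}\in\Xi_i}[f(\mathbf{x},\boldsymbol{\xi})-\theta c(\boldsymbol{\xi},\widehat{\boldsymbol{\xi}}^{\,i}_j)]$ with epigraph variables $t_{i,j}$, and then dualize the finite-dimensional problem in $\mathbf{p}$. Your Fenchel route for the cone (conjugating the indicator of $\mathcal{C}$) is equivalent to the paper's Lagrangian multiplier $\widetilde{\mathbf{p}}\in\mathcal{C}^*$, and the conjugate computation yielding $\lambda\,\widetilde{c}^*_{\widehat{\mathbf{p}}}\bigl((\bar{\mathbf{t}}+\boldsymbol{\mu}-\eta\mathbf{e}+\widetilde{\mathbf{p}})/\lambda\bigr)$ is correct.

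There is, however, one missing step. After the first dualization you have, for each fixed $\mathbf{p}$, $G(\mathbf{x},\mathbf{p})=\inf_{\theta\geqslant 0,\,\mathbf{t}}\{\theta\varepsilon+\sum_i\tfrac{p_i}{N_i}\sum_j t_{i,j}\}$ subject to the epigraph constraints, so after dualizing the $\rho$-budget the remaining object is $\inf_{\lambda\geqslant 0}\lambda\rho+\sup_{\mathbf{p}\in\Theta}\bigl[\inf_{\theta,\mathbf{t}}(\cdots)-\lambda\widetilde{c}(\mathbf{p},\widehat{\mathbf{p}})\bigr]$, a sup--inf. You pass directly to ``maximizing the linear objective $\sum_i p_i\bar t_i$ over $\mathbf{p}$'' as if $\bar{\mathbf{t}}$ and $\theta$ were fixed data, which silently commutes $\sup_{\mathbf{p}}$ with $\inf_{\theta,\mathbf{t}}$. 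That interchange is precisely what turns the derivation into the single-level minimization (P0) and it requires justification; the paper supplies it via Sion's min-max theorem, using that the Lagrangian is concave and upper semicontinuous in $\mathbf{p}$ on the compact convex set $\Theta$ and linear in $(\theta,\mathbf{t})$ on a convex set. (If instead you intended to dualize the $\varepsilon$-constraint over the joint $(\mathbf{p},\{Q_i\})$ problem so that $\inf_\theta$ sits outside $\sup_{\mathbf{p}}$ from the outset, you would need strong duality for that joint program, whose objective is bilinear in $(\mathbf{p},\{Q_i\})$; the per-partition result of Mohajerin Esfahani and Kuhn that you invoke does not cover it.) A secondary imprecision: your Slater point $\widehat{\mathbf{p}}$ certifies strict feasibility only of the $\rho$-ball, whereas the constraint qualification must also cover the simplex and cone constraints being dualized --- the paper asks for a $\mathbf{p}^*$ with $\langle\mathbf{e},\mathbf{p}^*\rangle=1$, $\mathbf{p}^*\in\mathrm{relint}(\mathbb{R}^{|\mathcal{I}|}_+)$ and $\mathbf{p}^*\in\mathrm{int}(\mathcal{C})$ --- and $\widehat{\mathbf{p}}$ need not even belong to $\mathcal{C}$.
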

\begin{proof}
Recall that we have assumed that regions $\Xi_i$ are disjoint. Thus, using the law of total probability, we can rewrite problem (P) as follows:
\begin{align}\label{problem_2}
\inf_{\mathbf{x}\in X} \sup_{\mathbf{p} \in \mathcal{P}}& G(\mathbf{x},\mathbf{p})
\end{align}
where we have considered the subproblem (SP):
\begin{subequations}
\begin{align}
  \text{(SP)}\; G(\mathbf{x},\mathbf{p})=& \sup_{Q_i \in \mathcal{Q}_i, \forall i}\sum_{i \in  \mathcal{I}} p_i \mathbb{E}_{Q_i}\left[f(\mathbf{x},\boldsymbol{\xi}) \right]\\
   \text{s.t.}    &\sum_{i \in \mathcal{I}}p_i C(Q_i,\widehat{Q}_i)\leqslant \varepsilon
\end{align}
\end{subequations}

The probability distribution $\widehat{Q}_i$ is defined as $\widehat{Q}_i=\frac{1}{N_i}\sum_{j=1}^{N_i} \delta_{\boldsymbol{\widehat{\xi}}_j^{\,\, i}}$, with
$\boldsymbol{\widehat{\xi}}_j^{\,\, i}\in \{\boldsymbol{\widehat{\xi}}_1^{\,\, i}, \ldots,
\boldsymbol{\widehat{\xi}}_{N_i}^{\,\, i} \}$, and $N_i$ being the number of data points in $\Xi_i$.

Note that the structure of problem~\eqref{problem_2} does not fit in the general ambiguity set proposed in~\cite{Chensim2020}.

Equivalently, we can recast the subproblem (SP) as
\begin{align}
    \text{(SP)} &= \left\{ \begin{array}{cl}\displaystyle \sup_{Q_i \in \mathcal{Q}_i, \Pi_i, \forall i}\sum_{i \in  \mathcal{I}} p_i &{}\displaystyle \int_{\Xi_i}f(\mathbf{x},
\boldsymbol{\xi}) Q_i (d\boldsymbol{\xi}) \\ \text {s.t.}&{}\displaystyle \sum_{i\in \mathcal{I}}p_i
\int_{\Xi_i^2}
    c(\boldsymbol{\xi}, \boldsymbol{\xi}') \Pi_i
    (d\boldsymbol{\xi}, d\boldsymbol{\xi}') \leqslant
    \varepsilon  \\[1ex] &{} \left\{ \begin{array}{l} \forall\; i,\; \Pi_i
    \text{
is } \text{ a } \text{ joint } \text{ distribution } \text{ of }
\boldsymbol{\xi} \text{ and } \boldsymbol{\xi}'\\ \text{ with }
\text{ marginals } Q_i \text{ and } \widehat{Q}_i\text{, } \text{
respectively } \end{array}\right. \end{array} \right. \\&=
\left\{ \begin{array}{cl} \displaystyle\sup_{\widetilde{Q}_j^i, \forall i \in \mathcal{I},j\leqslant N_i} &{}
\displaystyle \sum_{i \in \mathcal{I}}\frac{p_i}{N_i}\sum_{j=1}^{N_i}\displaystyle\int_{\Xi_i}f(\mathbf{x},
\boldsymbol{\xi}) \widetilde{Q}_j^i (d\boldsymbol{\xi}) \\ \text
{s.t.}&{}\displaystyle \sum_{i \in \mathcal{I}}\frac{p_i}{N_i}\sum_{j=1}^{N_i}\int_{\Xi_i}
    c(\boldsymbol{\xi}, \widehat{\boldsymbol{\xi}}^{\, i}_j)
    \widetilde{Q}_j^i (d\boldsymbol{\xi}) \leqslant \varepsilon
    \label{eps_constraint} \\
      &\displaystyle \int_{\Xi_i} \widetilde{Q}_j^i (d\boldsymbol{\xi})=1,\;
\forall i \in \mathcal{I},j \leqslant N_i \end{array} \right.
\end{align}
where reformulation \eqref{eps_constraint} follows on from the fact that the marginal
distribution of $\boldsymbol{\xi}'$ is the discrete uniform
distribution supported on points $\boldsymbol{\widehat{\xi}}^{\,\, i}_j$, $j = 1, \ldots, N_i$.
%
Thus, $\Pi_i$ is completely determined by the conditional
distributions  $\widetilde{Q}_j^i = \Pi_i(\xi,\xi'|\xi' = \widehat{\xi}_j^i)$, $\forall i \leqslant N_i$, that is,
$\Pi_i(d\boldsymbol{\xi}, d\boldsymbol{\xi}')=
\frac{1}{N_i}\sum_{j=1}^{N_i}
\delta_{\boldsymbol{\widehat{\xi}}_j^{\, i}}(d\boldsymbol{\xi}')
\widetilde{Q}_j^i(d\boldsymbol{\xi})$ \cite{MohajerinEsfahani2018}.

The mathematical program~\eqref{eps_constraint} constitutes a generalized moment problem over the normalized measures $\widetilde{Q}_j^i$, for which strong duality holds (see, for example, \cite{Shapiro2001}). We can, therefore, dualize the $\varepsilon$-budget constraint on the transport cost, thus obtaining:
%
\begin{align}
    \displaystyle \inf_{ \theta \geqslant 0}\sup_{\widetilde{Q}_j^i,\forall i \in \mathcal{I},j \leqslant N_i  }\ &
     \theta\varepsilon +  \sum_{i \in \mathcal{I}}
     \frac{p_i}{N_i}\sum_{j=1}^{N_i}\int_{\Xi_i}\left[f(\mathbf{x}
     ,\boldsymbol{\xi})-\theta c(\boldsymbol{\xi},
     \widehat{\boldsymbol{\xi}}^{\, i}_j)\right]\widetilde{Q}_j^i
     (d\boldsymbol{\xi})\\
      &   \text{s.t.} \ \int_{\Xi_i}
      \widetilde{Q}_j^i(d\boldsymbol{\xi})=1,\; \forall i \in \mathcal{I},j \leqslant N_i  \\
     = &\inf_{ \theta \geqslant 0}\  \theta\varepsilon + \sum_{i \in \mathcal{I}}
    \frac{p_i}{N_i}\sum_{j=1}^{N_i} \sup_{\widetilde{Q}_j^i}
     \int_{\Xi_i}\left[f(\mathbf{x},\boldsymbol{\xi})-
     \theta c(\boldsymbol{\xi},
     \widehat{\boldsymbol{\xi}}^{\, i}_j)\right]
     \widetilde{Q}_j^i(d\boldsymbol{\xi})\\
      & \hspace{35mm}   \text{s.t.} \; \int_{\Xi_i}
      \widetilde{Q}_j^i(d\boldsymbol{\xi})=1,\; \forall i \in \mathcal{I},j \leqslant N_i  \\
      =&\inf_{ \theta\geqslant 0}\  \theta\varepsilon +
      \sum_{i \in \mathcal{I}}
    \frac{p_i}{N_i}\sum_{j=1}^{N_i} \sup_{\boldsymbol{\xi} \in
      \Xi_i}\left[f(\mathbf{x},\boldsymbol{\xi})-\theta
      c(\boldsymbol{\xi},
      \widehat{\boldsymbol{\xi}}^{\, i}_j)\right]\\
    =&\inf_{ \theta, t_{ij}, \forall i \in \mathcal{I},j \leqslant N_i  }
    \theta\varepsilon +
    \sum_{i \in \mathcal{I}}
    \frac{p_i}{N_i}\sum_{j=1}^{N_i}t_{i,j} \\
    \label{problem_dualized11}
      & \hspace{12mm}  \text{s.t.} \; t_{i,j} \geqslant \sup_{\boldsymbol{\xi}
      \in \Xi_i}\left[f(\mathbf{x},\boldsymbol{\xi})-\theta
      c(\boldsymbol{\xi}, \widehat{\boldsymbol{\xi}}^{\, i}_j)\right],
      \;\forall i \in \mathcal{I}, \;j \leqslant N_i \\ \label{constraints_dualized_t}
     &\hspace{18mm}  \theta \geqslant 0
\end{align}
where the second equality derives from the fact that we can choose a Dirac distribution supported on $\Xi_i$ as $\widetilde{Q}_j^i$ .


Now,   dualizing the $\rho$-budget constraint on the transport cost in the inner supremum of problem \eqref{problem_2}, we obtain:
\begin{align}
\inf_{ \lambda \geqslant 0} & \lambda \rho +   \sup_{\mathbf{p} \in \Theta}\left[G(\mathbf{x},\mathbf{p})-\lambda  \widetilde{c}(\mathbf{p}, \mathbf{\widehat{p}})\right]\label{problem_dualized1}
\end{align}
Thus,
\begin{align}
\inf_{ \lambda \geqslant 0} & \lambda \rho +   \sup_{\mathbf{p} \in \Theta}\left[G(\mathbf{x},\mathbf{p})-\lambda  \widetilde{c}(\mathbf{p}, \mathbf{\widehat{p}})\right]\\
= \inf_{ \lambda \geqslant 0} & \lambda \rho +   \sup_{\mathbf{p} \in \Theta}\left[ \inf_{
\theta \geqslant 0,(t_{i,j})\;s.t. \eqref{problem_dualized11}}
    \theta\varepsilon +
    \sum_{i \in \mathcal{I}}
    \frac{p_i}{N_i}\sum_{j=1}^{N_i}t_{i,j}-\lambda  \widetilde{c}(\mathbf{p}, \mathbf{\widehat{p}})\right]
\end{align}
Since function $ \theta\varepsilon + \sum_{i \in \mathcal{I}}
    \frac{p_i}{N_i}\sum_{j=1}^{N_i}t_{i,j}-\lambda  \widetilde{c}(\mathbf{p}, \mathbf{\widehat{p}})$  is upper semicontinuous and  concave in $\mathbf{p}$ on the compact convex set $\Theta$  (recall that $\widetilde{c}$ is nonegative, lower semicontinuous, and convex in $\mathbf{p}$), and linear in $\theta$ and $t_{i,j}$ on the convex set defined by $\theta \geqslant 0$ and  \eqref{problem_dualized11}, we can apply Sion's min-max theorem (\cite{Sion1958}) and in this way, interchange the innest infimum with the outer supremum. Then, by merging the two infima, we arrive at
\begin{align*}
\inf_{ \lambda\geqslant 0,\theta \geqslant 0,(t_{i,j})} & \lambda \rho +  \theta\varepsilon + \sup_{\mathbf{p} \in \Theta}\left[
    \sum_{i \in \mathcal{I}}
    \frac{p_i}{N_i}\sum_{j=1}^{N_i}t_{i,j}-\lambda  \widetilde{c}(\mathbf{p}, \mathbf{\widehat{p}})\right]\\
 &   \text{s.t.} \; t_{i,j} \geqslant \sup_{\boldsymbol{\xi}
      \in \Xi_i}\left[f(\mathbf{x},\boldsymbol{\xi})-\theta_i
      c(\boldsymbol{\xi}, \widehat{\boldsymbol{\xi}}^i_j)\right],
      \;\forall i \in \mathcal{I}, \;j \leqslant N_i
\end{align*}

We focus now on the inner supremum,
\begin{equation}\label{conic_problem1}
 \sup_{\mathbf{p} \in \Theta}\left[ \left\langle \mathbf{p}, \left(\frac{1}{N_i}\sum_{j=1}^{N_i}t_{i,j}\right)_{i \in \mathcal{I}}\right \rangle-\lambda  \widetilde{c}(\mathbf{p}, \mathbf{\widehat{p}})\right]
\end{equation}
where we have written $ \sum_{i \in \mathcal{I}}
    \frac{p_i}{N_i}\sum_{j=1}^{N_i}t_{i,j}$ as $\Big\langle \mathbf{p},
    \left(\frac{1}{N_i}\sum_{j=1}^{N_i}t_{i,j}\right)_{i\in \mathcal{I}} \Big\rangle$. This is a concave maximization problem (be aware that $\langle \mathbf{p}, \mathbf{H}(\mathbf{x})\rangle-\lambda
\widetilde{c}(\mathbf{p}, \mathbf{\widehat{p}})$ is a concave function with respect to $\mathbf{p}$ and $\Theta$ is a convex
compact set; furthermore, notice that we have $\mathbf{H}(\mathbf{x}) = \left(\frac{1}{N_i}\sum_{j=1}^{N_i}t_{i,j}\right)_{i \in \mathcal{I}}$ in our particular case). Consequently, strong duality holds if a Slater condition is satisfied, that is, if there exists a point $\mathbf{p}^*
\in \text{relint}(\mathbb{R}^{|\mathcal{I}|}_+)$ such that $\langle \mathbf{e}, \mathbf{p}^* \rangle=1$,  and $\mathbf{p}^* \in
\text{int}(\mathcal{C})$ (see, for example, \cite{Boyd2004}).
Using a standard duality argument, we dualize the constraints $\mathbf{p}\in \mathbb{R}^{|\mathcal{I}|}_{+}$, $\langle
\mathbf{e},\mathbf{p}\rangle=1$ and $ \mathbf{p}\in \mathcal{C}$, with associated
multipliers $\boldsymbol{\mu}\in \mathbb{R}^{|\mathcal{I}|}_{+}$, $\eta \in \mathbb{R}$ and  $\widetilde{\mathbf{p}} \in
\mathcal{C}^*$,
respectively. Thus, we obtain the following problem:
\begin{align*}
&\inf_{\eta \in  \mathbb{R},\boldsymbol{\mu}\in \mathbb{R}^{|\mathcal{I}|}_{+},\;\widetilde{\mathbf{p}}\in
\mathcal{C}^* } \sup_{\mathbf{p}} \left\{ \left\langle \mathbf{p}, \left(\frac{1}{N_i}\sum_{j=1}^{N_i}t_{i,j}\right)_{i \in \mathcal{I}}\right\rangle-\lambda
\widetilde{c}(\mathbf{p}, \mathbf{\widehat{p}})+\langle \boldsymbol{\mu},\mathbf{p}\rangle+\eta (1-\langle
\mathbf{e},\mathbf{p}\rangle)+\langle \widetilde{\mathbf{p}},\mathbf{p}\rangle\right\}=\\
&\inf_{\eta \in  \mathbb{R},\;\boldsymbol{\mu}\in \mathbb{R}^{|\mathcal{I}|}_{+},\widetilde{\mathbf{p}}\in
\mathcal{C}^* } \eta+\sup_{\mathbf{p}} \left\{ \left \langle \mathbf{p}, \left(\frac{1}{N_i}\sum_{j=1}^{N_i}t_{i,j}\right)_{i \in \mathcal{I}}+\boldsymbol{\mu}-\eta
\mathbf{e}+\widetilde{\mathbf{p}}\right\rangle-\lambda  \widetilde{c}(\mathbf{p}, \mathbf{\widehat{p}})
\right\}=\\
&\inf_{\eta \in  \mathbb{R},\;\boldsymbol{\mu}\in \mathbb{R}^{|\mathcal{I}|}_{+},\widetilde{\mathbf{p}}\in
\mathcal{C}^* } \eta+\lambda\sup_{\mathbf{p}} \left\{ \left\langle \mathbf{p}, \frac{\left(\frac{1}{N_i}\sum_{j=1}^{N_i}t_{i,j}\right)_{i \in \mathcal{I}}+\boldsymbol{\mu}-\eta
\mathbf{e}+\widetilde{\mathbf{p}}}{\lambda}\right\rangle-  \widetilde{c}(\mathbf{p}, \mathbf{\widehat{p}})
\right\}=\\
&\inf_{\eta \in  \mathbb{R},\;\boldsymbol{\mu}\in \mathbb{R}^{|\mathcal{I}|}_{+},\widetilde{\mathbf{p}}\in
\mathcal{C}^* } \eta+\lambda \widetilde{c}^*_{\mathbf{\widehat{p}}}\left(\frac{\left(\frac{1}{N_i}\sum_{j=1}^{N_i}t_{i,j}\right)_{i \in \mathcal{I}}+\boldsymbol{\mu}-\eta
\mathbf{e}+\widetilde{\mathbf{p}}}{\lambda} \right)
\end{align*}
where $\widetilde{c}^{*}_{\mathbf{\widehat{p}}}(\cdot)$ is the convex conjugate function of $\widetilde{c}(\cdot, \mathbf{\widehat{p}})$, with $\mathbf{\widehat{p}}$ fixed.

Therefore, problem \eqref{problem_2} can be equivalently reformulated as follows:
\begin{align}
   \text{(P0)} \; \inf_{\mathbf{x},\lambda,\boldsymbol{\mu},\eta\;\widetilde{\mathbf{p}}, \theta, \mathbf{t} }& \lambda \rho+ \eta+\theta\varepsilon+\lambda \widetilde{c}^*_{\mathbf{\widehat{p}}}\left(\frac{\left(\frac{1}{N_i}\sum_{j=1}^{N_i}t_{i,j}\right)_{i \in \mathcal{I}}+\boldsymbol{\mu}-\eta
\mathbf{e}+\widetilde{\mathbf{p}}}{\lambda} \right)\notag\\
    \text{s.t.} \enskip & t_{i,j} \geqslant \sup_{\boldsymbol{\xi}
      \in \Xi_i}\left[f(\mathbf{x},\boldsymbol{\xi})-\theta
      c(\boldsymbol{\xi}, \widehat{\boldsymbol{\xi}}^{\, i}_j)\right],
      \;\forall i \in \mathcal{I}, \;j \leqslant N_i\notag\\ 
      & \mathbf{x}\in X,\lambda \geqslant 0,\boldsymbol{\mu}\in \mathbb{R}^{|\mathcal{I}|}_{+},\eta \in  \mathbb{R},\;\widetilde{\mathbf{p}}\in
\mathcal{C}^*,\theta \geqslant  0 \notag\\
& t_{i,j} \in \mathbb{R}, \forall i \in \mathcal{I},  j \leqslant N_i \notag
\end{align}

\end{proof}

Moreover, in the case that the cost function $\widetilde{c}(\cdot, \cdot)$ is given by a norm, we have $\widetilde{c}_{\mathbf{\widehat{p}}}(\mathbf{p})=\left\| \mathbf{p}-\widehat{\mathbf{p}}\right\|$. The next corollary deals with this particular case.
\begin{corollary}
If the cost functions $c(\cdot, \cdot)$ and $\widetilde{c}(\cdot, \cdot) $ are  given by  norms, then for any non-negative values of parameters $\varepsilon, \rho$, the problem (P) is equivalent to the following problem
 \begin{align}
 \text{(P1)}\;\;    \inf_{\mathbf{x},\lambda,\boldsymbol{\mu},\eta\;\widetilde{\mathbf{p}}, \theta, \mathbf{t}}& \lambda \rho+ \eta+\theta\varepsilon + \sum_{i \in \mathcal{I}} \widehat{p}_i\left(    \frac{1}{N_i}\sum_{j=1}^{N_i}t_{i,j}+\mu_i-\eta+\widetilde{p}_i\right)\notag\\
  \text{s.t.} \;     & t_{i,j} \geqslant \sup_{\boldsymbol{\xi} \in \Xi_i}\left[f(\mathbf{x},\boldsymbol{\xi})-\theta \left\|\boldsymbol{\xi}- \widehat{\boldsymbol{\xi}}^{\, i}_j\right\|\right], \forall i\in \mathcal{I}, \forall j\leqslant N_i \label{cons_sup_norm_1} \\
  & \left\|\left(\frac{1}{N_i}\sum_{j=1}^{N_i}t_{i,j}+\mu_i-\eta+\widetilde{p}_i\right)_{i \in \mathcal{I}}\right\|_{*} \leqslant \lambda \notag\\
  & \mathbf{x}\in X,\lambda \geqslant 0,\boldsymbol{\mu}\in \mathbb{R}^{|\mathcal{I}|}_{+},\eta \in  \mathbb{R},\;\widetilde{\mathbf{p}}\in
\mathcal{C}^*,\theta \geqslant  0 \notag\\
& t_{i,j} \in \mathbb{R}, \forall i \in \mathcal{I}, \forall j \leqslant N_i\notag
\end{align}
\end{corollary}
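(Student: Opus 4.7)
The plan is to apply the preceding Theorem to obtain problem (P0) and then specialize the perspective term $\lambda\widetilde{c}^{*}_{\widehat{\mathbf{p}}}(\cdot/\lambda)$ using the fact that the convex conjugate of a norm is the indicator function of the unit ball of the dual norm. The constraint \eqref{P1_c} already carries over verbatim with $c(\boldsymbol{\xi},\widehat{\boldsymbol{\xi}}^{\,i}_{j})$ replaced by $\|\boldsymbol{\xi}-\widehat{\boldsymbol{\xi}}^{\,i}_{j}\|$, so the only work is in rewriting the objective.

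First I would compute $\widetilde{c}^{*}_{\widehat{\mathbf{p}}}$. With $\widetilde{c}(\mathbf{p},\widehat{\mathbf{p}})=\|\mathbf{p}-\widehat{\mathbf{p}}\|$, the change of variables $\mathbf{u}=\mathbf{p}-\widehat{\mathbf{p}}$ gives
\[
\widetilde{c}^{*}_{\widehat{\mathbf{p}}}(\mathbf{y})
=\sup_{\mathbf{p}}\langle \mathbf{y},\mathbf{p}\rangle - \|\mathbf{p}-\widehat{\mathbf{p}}\|
=\langle \mathbf{y},\widehat{\mathbf{p}}\rangle + \sup_{\mathbf{u}}\bigl(\langle \mathbf{y},\mathbf{u}\rangle - \|\mathbf{u}\|\bigr),
\]
and the remaining supremum equals $0$ when $\|\mathbf{y}\|_{*}\leqslant 1$ and $+\infty$ otherwise. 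Hence $\widetilde{c}^{*}_{\widehat{\mathbf{p}}}(\mathbf{y})=\langle \mathbf{y},\widehat{\mathbf{p}}\rangle + I_{\{\|\cdot\|_{*}\leqslant 1\}}(\mathbf{y})$.

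Next, let $\mathbf{z}:=\left(\frac{1}{N_{i}}\sum_{j=1}^{N_{i}}t_{i,j}\right)_{i\in\mathcal{I}}+\boldsymbol{\mu}-\eta\mathbf{e}+\widetilde{\mathbf{p}}$. Plugging into the perspective expression in (P0), for $\lambda>0$ I get
\[
\lambda\,\widetilde{c}^{*}_{\widehat{\mathbf{p}}}\!\left(\tfrac{\mathbf{z}}{\lambda}\right)
= \langle \mathbf{z},\widehat{\mathbf{p}}\rangle + \lambda\, I_{\{\|\cdot\|_{*}\leqslant 1\}}\!\left(\tfrac{\mathbf{z}}{\lambda}\right)
= \langle \mathbf{z},\widehat{\mathbf{p}}\rangle \text{ whenever } \|\mathbf{z}\|_{*}\leqslant \lambda,
\]
and $+\infty$ otherwise; the case $\lambda=0$ is handled by the standard perspective-function convention, forcing $\mathbf{z}=\mathbf{0}$ in the limit, which is exactly what the closed constraint $\|\mathbf{z}\|_{*}\leqslant \lambda$ encodes at $\lambda=0$. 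Expanding $\langle \mathbf{z},\widehat{\mathbf{p}}\rangle$ component-wise yields the linear sum $\sum_{i\in\mathcal{I}}\widehat{p}_{i}\bigl(\frac{1}{N_{i}}\sum_{j=1}^{N_{i}}t_{i,j}+\mu_{i}-\eta+\widetilde{p}_{i}\bigr)$, so after adding the dual-norm constraint $\|\mathbf{z}\|_{*}\leqslant \lambda$ to the feasible set, the objective of (P0) reduces exactly to the objective of (P1).

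Finally, since $c$ is also a norm, constraint \eqref{P1_c} in (P0) instantiates to \eqref{cons_sup_norm_1}, and all the remaining sign and cone constraints ($\lambda\geqslant 0$, $\boldsymbol{\mu}\in\mathbb{R}^{|\mathcal{I}|}_{+}$, $\eta\in\mathbb{R}$, $\widetilde{\mathbf{p}}\in\mathcal{C}^{*}$, $\theta\geqslant 0$) are carried over unchanged, giving (P1). The main delicate point — and essentially the only one not already resolved by the Theorem — is the handling of the boundary case $\lambda=0$: the equivalence between the perspective representation of $\widetilde{c}^{*}_{\widehat{\mathbf{p}}}$ and the pair (linear term $\langle \mathbf{z},\widehat{\mathbf{p}}\rangle$, dual-norm constraint) must hold in the closed sense, which it does because the perspective of a proper, lower semicontinuous convex function is itself lower semicontinuous, and the infimum over $\lambda$ in (P0) is attained as a limit from $\lambda>0$ when finite.
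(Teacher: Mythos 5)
Your proposal is correct and follows essentially the same route as the paper: the paper's proof likewise reduces the corollary to computing the convex conjugate of $\widetilde{c}_{\widehat{\mathbf{p}}}(\mathbf{p})=\|\mathbf{p}-\widehat{\mathbf{p}}\|$ (stated as a lemma and justified by citation to Lucchetti, whereas you derive it directly via the change of variables and the fact that the conjugate of a norm is the indicator of the dual unit ball) and then substituting the resulting linear term plus dual-norm constraint into (P0). Your explicit treatment of the boundary case $\lambda=0$ is a detail the paper leaves implicit, but it does not change the argument.
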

\begin{proof}

 We  use the following Lemma to put problem (P0) in a better shape.
 \begin{lemma}\label{conjnorm}
 Let $ \widetilde{c}_{\mathbf{\widehat{p}}}(\mathbf{p})=\left\| \mathbf{p}-\widehat{\mathbf{p}}\right\|$, where $\widehat{\mathbf{p}}\in \mathbb{R}^{|\mathcal{I}|}$ is a  fixed vector and $\|\cdot \|$ a norm in $\mathbb{R}^{|\mathcal{I}|}$. Then, it holds that the convex conjugate function of $\widetilde{c}_{\mathbf{\widehat{p}}}(\mathbf{p})$ is as follows
$$\widetilde{c}^{*}_{\mathbf{\widehat{p}}}(\mathbf{s})= \left\{
\begin{array}{lr}
\sum_{i \in \mathcal{I}}\widehat{p}_i s_i     &\text{if}\quad  \left\| \mathbf{s}\right\|_{*} \leqslant 1 \\
 \infty    &\text{if}\quad \left\| \mathbf{s}\right\|_{*} > 1
\end{array}\right.$$
 \end{lemma}
\begin{proof}
 The claim of the Lemma follows from  Proposition 5.1.4. (vii) and Example 5.1.2 (b) of \cite{Lucchetti2006}.\qed
\end{proof}

Therefore, problem (P0) reduces to
    \begin{align}
 \text{(P1)}\;\;    \inf_{\mathbf{x},\lambda,\boldsymbol{\mu},\eta\;\widetilde{\mathbf{p}}, \theta, \mathbf{t}}& \lambda \rho+ \eta+\theta\varepsilon + \sum_{i \in \mathcal{I}} \widehat{p}_i\left(    \frac{1}{N_i}\sum_{j=1}^{N_i}t_{i,j}+\mu_i-\eta+\widetilde{p}_i\right)\notag\\
  \text{s.t.} \;     & t_{i,j} \geqslant \sup_{\boldsymbol{\xi} \in \Xi_i}\left[f(\mathbf{x},\boldsymbol{\xi})-\theta \left\|\boldsymbol{\xi}- \widehat{\boldsymbol{\xi}}^{\, i}_j\right\|\right], \forall i\in \mathcal{I}, \forall j\leqslant N_i \notag \\
  & \left\|\left(\frac{1}{N_i}\sum_{j=1}^{N_i}t_{i,j}+\mu_i-\eta+\widetilde{p}_i\right)_{i \in \mathcal{I}}\right\|_{*} \leqslant \lambda \notag\\
  & \mathbf{x}\in X,\lambda \geqslant 0,\boldsymbol{\mu}\in \mathbb{R}^{|\mathcal{I}|}_{+},\eta \in  \mathbb{R},\;\widetilde{\mathbf{p}}\in
\mathcal{C}^*,\theta \geqslant  0 \notag\\
& t_{i,j} \in \mathbb{R}, \forall i \in \mathcal{I}, \forall j \leqslant N_i\notag
\end{align}
\qed
\end{proof}
 \textbf{Remarks.} Our data-driven DRO framework (P) can be easily understood as a generalization of other popular DRO approaches. To see this, first we need to remove the order cone constraints on the probabilities associated with each subregion into which the support $\Xi$ has been partitioned, that is, the condition $\mathbf{p} \in \mathcal{C}$, and then proceed as indicated below:

\begin{enumerate}
    \item If we set $\varepsilon=0$, $|\mathcal{I}|=N$, with every partition containing a single and different data point from the sample, and use a $\phi$-divergence to build the  cost function, i.e., $\widetilde{c}_{\mathbf{\widehat{p}}}(\mathbf{p})=\sum_{i \in \mathcal{I}}\widehat{p}_i \phi\left(\frac{p_i}{\widehat{p}_i}\right)$ and hence, $c^{*}_{\mathbf{\widehat{p}}}(\mathbf{s})=\sum_{i \in \mathcal{I}}\widehat{p}_i \phi^*(s_i)$, then our data-driven DRO approach boils down to that of   \cite{Ben-Tal2013} and \cite{Bayraksan2015}.
   \item On the contrary, if we set $|\mathcal{I}|=1$,
   $c$ is given by a norm
   and take  $\widetilde{c}_{\mathbf{\widehat{p}}}(\mathbf{p})=\left\| \mathbf{p}-\widehat{\mathbf{p}}\right\|$ (hence  $\widetilde{c}^{*}_{\mathbf{\widehat{p}}}(\mathbf{s})=\sum_{i \in \mathcal{I}}\widehat{p}_i s_i$ if $\left\| \mathbf{s}\right\|_{*} \leqslant 1$),
   we get the model of \cite{MohajerinEsfahani2018}.
\end{enumerate}

Finally, we remark that constraint \eqref{P1_c} for each $i \in I'$ is equivalent (under the assumptions we make on the transportation cost function) to $t_{i,1} \geqslant \sup_{\xi \in \Xi_i}f(x,\xi)$.

\subsection{Tractable reformulations}
In this section we provide \emph{nice} reformulations of our DRO model (P) under
mild assumptions. For this purpose, we make use of the theoretical foundations laid out in \cite{MohajerinEsfahani2018}. Likewise, some extensions to our model, such as the extension to two-stage stochastic programming problems, are omitted here for brevity and because they can be easily derived in a similar way as found  in \cite{MohajerinEsfahani2018} for the data-driven DRO approach they develop.

We start our theoretical development with the following assumption.

\begin{assumption}\label{assumption_convex}
We consider that $\Xi_i$, for each $i \in \mathcal{I}$, is a closed convex set, and that
$f(\mathbf{x}, \boldsymbol{\xi}):=$ $\max_{k \leqslant K} g_k(\mathbf{x},
\boldsymbol{\xi})$, with $g_k$, for each $k\leqslant K$, being a proper, concave
and upper
semicontinuous function with respect to $\boldsymbol{\xi}$ (for any fixed value of
$\mathbf{x}\in X$)  and not identically $\infty$ on
$\Xi_i$.
\end{assumption}

Theorem~\ref{T1} below provides a tractable reformulation of problem (P1) as a finite convex problem. For ease of notation,  we suppress the dependence on the variable $\mathbf{x}$ (bearing in mind that this dependence occurs through functions $g_k$, $k \leqslant K$).

\begin{theorem}\label{T1}
If Assumption \ref{assumption_convex} holds and if we choose a norm (in $\mathbb{R}^d$) as the transportation cost function $c$, then for any values of $\rho$ and $\varepsilon$, problem (P1) is equivalent to the following finite convex problem:
    \begin{align*}
 \text{(P1')}\;\;    \inf_{\mathbf{x},\lambda,\eta, \boldsymbol{\mu},\;\widetilde{\mathbf{p}},\mathbf{z}_{ijk},\mathbf{v}_{ijk}, \theta, \mathbf{t}}& \lambda \rho+ \eta+\theta\varepsilon + \sum_{i \in \mathcal{I}} \widehat{p}_i\left(    \frac{1}{N_i}\sum_{j=1}^{N_i}t_{i,j}+\mu_i-\eta+\widetilde{p}_i\right)\\ \nonumber
  \text{s.t.} \;     & t_{i,j} \geqslant [-g_k]^*(\mathbf{z}_{ijk}-\mathbf{v}_{ijk})+S_{\Xi_i}(\mathbf{v}_{ijk})-\langle \mathbf{z}_{ijk}, \widehat{\boldsymbol{\xi}}^{\, i}_j \rangle\\ \nonumber
  & \forall i\in \mathcal{I}, \forall j\leqslant N_i, \forall k \leqslant K\\
  &  \left\| \mathbf{z}_{ijk}  \right\|_{*} \leqslant \theta , \forall i\in
  \mathcal{I}, \forall j\leqslant N_i, \forall k \leqslant K  \\
  & \left\|\left(\frac{1}{N_i}\sum_{j=1}^{N_i}t_{i,j}+\mu_i-\eta+\widetilde{p}_i\right)_{i \in \mathcal{I}}\right\|_{*} \leqslant \lambda \\
  & \mathbf{x}\in X,\lambda \geqslant 0,\theta \geqslant  0,\eta \in  \mathbb{R}, \boldsymbol{\mu} \in \mathbb{R}_{+}^{|\mathcal{I}|},\;\widetilde{\mathbf{p}}\in
\mathcal{C}^*,\\
&  \mathbf{z}_{ijk},\mathbf{v}_{ijk} \in \mathbb{R}^d , \forall i \in \mathcal{I}, \forall j \leqslant N_i, \forall k \leqslant K\\
& t_{i,j} \in \mathbb{R}, \forall i \in \mathcal{I}, \forall j \leqslant N_i
\end{align*}

\end{theorem}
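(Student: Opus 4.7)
The reformulation from (P1) to (P1') only affects constraint \eqref{cons_sup_norm_1}; everything else (the objective, the dual-norm constraint on the vector of $\mathbf{t}$-aggregates, the budget multipliers, the order-cone dual variable) is copied verbatim. So my plan is to focus exclusively on that constraint and show that, under Assumption \ref{assumption_convex} with $c$ a norm,
\[
 t_{i,j} \;\geqslant\; \sup_{\boldsymbol{\xi}\in \Xi_i}\Bigl[f(\mathbf{x},\boldsymbol{\xi})-\theta\,\bigl\|\boldsymbol{\xi}-\widehat{\boldsymbol{\xi}}^{\, i}_j\bigr\|\Bigr]
\]
is equivalent, after introducing the auxiliary variables $\mathbf{z}_{ijk},\mathbf{v}_{ijk}$, to the block of constraints in (P1').

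First I would use $f=\max_{k\leqslant K}g_k$ to split the single constraint into $K$ constraints, one per index $k$, exploiting that ``$t\geqslant\sup\max_k$'' is the same as ``$t\geqslant\sup$ for each $k$''. Next, for fixed $i,j,k$, I would rewrite the norm via its dual characterization, $\theta\|\boldsymbol{\xi}-\widehat{\boldsymbol{\xi}}^{\, i}_j\|=\sup_{\|\mathbf{z}\|_*\leqslant\theta}\langle\mathbf{z},\boldsymbol{\xi}-\widehat{\boldsymbol{\xi}}^{\, i}_j\rangle$, turning the supremum into the saddle-point problem
\[
 \sup_{\boldsymbol{\xi}\in\Xi_i}\inf_{\|\mathbf{z}\|_*\leqslant\theta}\bigl[g_k(\mathbf{x},\boldsymbol{\xi})-\langle\mathbf{z},\boldsymbol{\xi}-\widehat{\boldsymbol{\xi}}^{\, i}_j\rangle\bigr].
\]
Here the integrand is concave u.s.c.\ in $\boldsymbol{\xi}$ on the convex set $\Xi_i$ and linear (hence convex continuous) in $\mathbf{z}$ on the compact convex set $\{\|\mathbf{z}\|_*\leqslant\theta\}$, so Sion's minimax theorem lets me swap the order and extract the $\langle\mathbf{z},\widehat{\boldsymbol{\xi}}^{\, i}_j\rangle$ term outside of the $\boldsymbol{\xi}$-supremum.

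Now I am left with $\sup_{\boldsymbol{\xi}\in\Xi_i}[g_k(\mathbf{x},\boldsymbol{\xi})-\langle\mathbf{z},\boldsymbol{\xi}\rangle]$, which I would rewrite as $-\inf_{\boldsymbol{\xi}}[(-g_k)(\boldsymbol{\xi})+\chi_{\Xi_i}(\boldsymbol{\xi})-\langle-\mathbf{z},\boldsymbol{\xi}\rangle]$, recognising it as the Fenchel conjugate $(-g_k+\chi_{\Xi_i})^*(-\mathbf{z})$. By Assumption \ref{assumption_convex}, both $-g_k(\mathbf{x},\cdot)$ and $\chi_{\Xi_i}$ are proper convex l.s.c., with $-g_k$ not identically $+\infty$ on $\Xi_i$; under this relative-interior compatibility the Moreau--Rockafellar theorem gives the inf-convolution formula $(-g_k+\chi_{\Xi_i})^*(\mathbf{y})=\inf_{\mathbf{v}}\{[-g_k]^*(\mathbf{y}-\mathbf{v})+S_{\Xi_i}(\mathbf{v})\}$, because $\chi_{\Xi_i}^*=S_{\Xi_i}$. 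Substituting back (and renaming $\mathbf{z}_{ijk}:=-\mathbf{z}$, $\mathbf{v}_{ijk}:=\mathbf{v}$) I obtain
\[
 \inf_{\|\mathbf{z}_{ijk}\|_*\leqslant\theta,\,\mathbf{v}_{ijk}}\;[-g_k]^*(\mathbf{z}_{ijk}-\mathbf{v}_{ijk})+S_{\Xi_i}(\mathbf{v}_{ijk})-\langle\mathbf{z}_{ijk},\widehat{\boldsymbol{\xi}}^{\, i}_j\rangle,
\]
and the epigraph trick converts the ``$t_{i,j}\geqslant\inf\{\cdots\}$'' constraint into the existence of $\mathbf{z}_{ijk},\mathbf{v}_{ijk}$ satisfying the two inequalities displayed in (P1'). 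Convexity of (P1') is then automatic: $[-g_k]^*$ and $S_{\Xi_i}$ are convex in their arguments, and every other constraint is conic/linear.

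The main obstacle I anticipate is justifying the two duality steps cleanly. Sion's theorem is fine because the $\mathbf{z}$-set is compact, but the Moreau--Rockafellar step needs the qualification $\mathrm{ri}(\mathrm{dom}(-g_k))\cap\mathrm{ri}(\Xi_i)\neq\emptyset$, which follows from Assumption \ref{assumption_convex} (``$g_k$ is not identically $\infty$ on $\Xi_i$'', together with $\Xi_i$ convex closed). I would also need a brief remark that, should the relative-interior condition fail in degenerate corner cases, the identity still holds in the appropriate closed sense so that the epigraphical reformulation remains valid; this parallels exactly the treatment in Mohajerin Esfahani and Kuhn (2018, Thm.~4.2) whose machinery I am re-using in our partitioned setting.
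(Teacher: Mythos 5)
Your proposal is correct and follows essentially the same route as the paper, whose own ``proof'' simply defers the reformulation of constraint \eqref{cons_sup_norm_1} to Theorem~4.2 of Mohajerin Esfahani and Kuhn (2018); the chain you spell out (splitting the max over $k$, dualizing the norm, Sion's minimax swap over the compact dual ball, the inf-convolution identity $(-g_k+\chi_{\Xi_i})^*(\mathbf{y})=\inf_{\mathbf{v}}\{[-g_k]^*(\mathbf{y}-\mathbf{v})+S_{\Xi_i}(\mathbf{v})\}$, and the epigraph reformulation) is precisely the argument being invoked there. Your explicit attention to the relative-interior qualification for the exact infimal convolution is a welcome addition, not a deviation.
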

where $[-g_k]^*(\mathbf{z}_{ijk}-\mathbf{v}_{ijk})$ is the conjugate function of $-g_k$ evaluated at $\mathbf{z}_{ijk}-\mathbf{v}_{ijk}$ and $S_{\Xi_i}$ is the support function of $\Xi_i$.

\begin{proof}

In essence, the complexity of problem (P1) depends on our ability to reformulate the supremum in constraint~\eqref{cons_sup_norm_1} in a tractable manner. This is possible under Asummption~\ref{assumption_convex}, following similar steps to those in the proof of Theorem 4.2 in \cite{MohajerinEsfahani2018}, to which we refer. \qed
\end{proof}


We note that Asummption~\ref{assumption_convex} covers the particular case where functions $g_k$, $k \leqslant K$, are affine and, as a result, $f$ is convex piecewise linear. The single-item newsvendor problem, which we illustrate in the first part of Section~\ref{numerics}, constitutes a popular example of this case.


\subsubsection{Separable objective function}
Now we extend the results presented above to a class of objective  functions which are  additively separable with respect to the dimension $d$. We assume here that
$\boldsymbol{\xi}=(\boldsymbol{\xi}_1, \ldots, \boldsymbol{\xi}_d)$, where $\boldsymbol{\xi}_l \in \mathbb{R}^p$, for each $l=1,\ldots, d$.  Furthermore,  we consider the separable norm $\left\|\boldsymbol{\xi}\right\|_d:=\sum_{l=1}^d \left\| \boldsymbol{\xi}_l \right\| $
associated with the base norm $\left\| \cdot \right\|$ (on $\mathbb{R}^p$). Finally, we assume that the function $f$ is given as follows:
\begin{equation}
    f(\mathbf{x}, \boldsymbol{\xi})=\sum_{l=1}^d \max_{k \leqslant K}g_{lk} (\mathbf{x}, \boldsymbol{\xi}_l)
\end{equation}

In this case, the complexity of the resulting DRO problem is linear with respect to the number $N$ of samples. The multi-item newsvendor problem, which we illustrate in the second half of Section~\ref{numerics}, constitutes a popular example of this case.
\begin{theorem}
If $ f(\mathbf{x}, \boldsymbol{\xi})=\sum_{l=1}^d \max_{k \leqslant K}g_{lk} (\mathbf{x}, \boldsymbol{\xi}_l)$,  $\{g_{lk}\}_{k \leqslant K}$ satisfy Assumption \ref{assumption_convex} for all $l\leqslant d$, and  $\Xi_i$, for each $i \in \mathcal{I}$, is given by the Cartesian product of closed convex sets (that is,  $\Xi_i:=\prod_{l=1}^d D_l^i$, with $D_l^i$ a closed convex set), and if we choose the norm $\left\| \cdot \right\|_d$ as the transportation cost function $c$, then for any values of $\rho$ and $\varepsilon$,  problem (P)  is equivalent  to the following finite convex problem:
    \begin{align}
 \text{(P2)}\;\;    \inf_{\mathbf{x},\lambda,\eta,\boldsymbol{\mu},\;\widetilde{\mathbf{p}},\mathbf{z}_{ijkl}, \mathbf{v}_{ijkl}, \theta,\boldsymbol{\omega}}&  \lambda \rho+ \eta+\theta\varepsilon + \sum_{i \in \mathcal{I}} \widehat{p}_i\left(    \frac{1}{N_i}\sum_{j=1}^{N_i} \sum_{l=1}^d\omega_{ijl}+\mu_i-\eta+\widetilde{p}_i\right)\\
  \text{s.t.} \;     &  \omega_{ijl}\geqslant [-g_{lk}]^*(\mathbf{z}_{ijkl}-\mathbf{v}_{ijkl})+S_{D_l^i}(\mathbf{v}_{ijkl})-\langle
  \mathbf{z}_{ijkl}, \widehat{\boldsymbol{\xi}}^{\, i}_{jl} \rangle,\\ \nonumber
  & \forall i\in \mathcal{I}, \forall j\leqslant N_i, \forall k \leqslant K, \forall l\leqslant d \\
  &  \left\| \mathbf{z}_{ijkl} \right\|_{*} \leqslant \theta , \forall i\in \mathcal{I}, \forall j\leqslant N_i, \forall k \leqslant K, \forall l\leqslant d  \\
  & \left\| \left(    \frac{1}{N_i}\sum_{j=1}^{N_i}\sum_{l=1}^d\omega_{ijl}+\mu_i-\eta+\widetilde{p}_i\right) \right\|_{*} \leqslant \lambda \\
  & \mathbf{x}\in X,\lambda \geqslant 0,\theta \geqslant  0,\eta \in  \mathbb{R},\boldsymbol{\mu} \in \mathbb{R}_{+}^{|\mathcal{I}|},\;\widetilde{\mathbf{p}}\in
\mathcal{C}^*,\\
&  \omega_{ijl}\in \mathbb{R},  \forall i\in \mathcal{I}, \forall j\leqslant N_i,  \forall l\leqslant d\\
&  \mathbf{z}_{ijkl}, \mathbf{v}_{ijkl} \in \mathbb{R}^p, \forall i\in \mathcal{I}, \forall j\leqslant N_i, \forall k \leqslant K, \forall l\leqslant d
\end{align}

\end{theorem}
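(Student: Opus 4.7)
The plan is to reduce the separable case to the non-separable reformulation already established in Theorem~\ref{T1} by exploiting, at the level of constraint~\eqref{cons_sup_norm_1}, the fact that the objective, the transportation norm, and each partition cell all split across the $d$ coordinates. Concretely, I would start from problem (P1) (which is equivalent to (P) by the previous corollary) and rewrite the constraint $t_{i,j} \geqslant \sup_{\boldsymbol{\xi}\in \Xi_i}[f(\mathbf{x},\boldsymbol{\xi}) - \theta \|\boldsymbol{\xi} - \widehat{\boldsymbol{\xi}}^{\,i}_j\|_d]$ by plugging in the explicit separable form of $f$ and of the norm $\|\cdot\|_d$, and then using $\Xi_i = \prod_{l=1}^d D_l^i$ to decompose
\begin{equation*}
\sup_{\boldsymbol{\xi}\in \Xi_i}\left[\sum_{l=1}^d \max_{k\leqslant K} g_{lk}(\mathbf{x},\boldsymbol{\xi}_l) - \theta \sum_{l=1}^d \|\boldsymbol{\xi}_l - \widehat{\boldsymbol{\xi}}^{\,i}_{jl}\|\right] = \sum_{l=1}^d \sup_{\boldsymbol{\xi}_l \in D_l^i} \max_{k\leqslant K}\left[g_{lk}(\mathbf{x},\boldsymbol{\xi}_l) - \theta \|\boldsymbol{\xi}_l - \widehat{\boldsymbol{\xi}}^{\,i}_{jl}\|\right].
\end{equation*}

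Once this coordinate-wise decoupling is in hand, I would introduce auxiliary variables $\omega_{ijl}$ bounding each per-coordinate supremum from above, so that $t_{i,j}$ in (P1) can be replaced by $\sum_{l=1}^d \omega_{ijl}$ and the single hard constraint splits into $d \cdot N_i$ inner constraints of the form $\omega_{ijl} \geqslant \sup_{\boldsymbol{\xi}_l \in D_l^i} \max_{k\leqslant K}[g_{lk}(\mathbf{x},\boldsymbol{\xi}_l) - \theta \|\boldsymbol{\xi}_l - \widehat{\boldsymbol{\xi}}^{\,i}_{jl}\|]$. Each of these now has exactly the same structure as the supremum appearing in the proof of Theorem~\ref{T1} (with $\Xi_i$ replaced by $D_l^i$, $g_k$ by $g_{lk}$, and $\widehat{\boldsymbol{\xi}}^{\,i}_j$ by $\widehat{\boldsymbol{\xi}}^{\,i}_{jl}$), and so under Assumption~\ref{assumption_convex} applied to each family $\{g_{lk}\}_{k\leqslant K}$, I can invoke the conjugate-duality/support-function calculus used there (Fenchel duality for concave maximization of $g_{lk}$ minus a norm over a closed convex set) to recast each $\omega_{ijl}$-constraint as
\begin{equation*}
\omega_{ijl} \geqslant [-g_{lk}]^*(\mathbf{z}_{ijkl}-\mathbf{v}_{ijkl}) + S_{D_l^i}(\mathbf{v}_{ijkl}) - \langle \mathbf{z}_{ijkl}, \widehat{\boldsymbol{\xi}}^{\,i}_{jl}\rangle, \quad \|\mathbf{z}_{ijkl}\|_* \leqslant \theta,
\end{equation*}
for all $k \leqslant K$, with the $\max_k$ absorbed into the collection of inequalities indexed by $k$.

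Finally, substituting $\sum_{l=1}^d \omega_{ijl}$ for $t_{i,j}$ in the objective and in the $\|\cdot\|_*$ constraint of (P1) yields precisely the finite convex program (P2). The main obstacle, in my view, is the rigorous justification of the decoupling of the supremum over $\Xi_i$ into a sum of suprema over each $D_l^i$: this requires (i) the product structure of $\Xi_i$, (ii) the fact that the integrand separates as a sum of terms each depending on a single block $\boldsymbol{\xi}_l$, and (iii) being careful that the maximum-of-concave $g_{lk}$ commutes with the block decomposition (which it does because $\max_{k}\sum_l h_{lk} \neq \sum_l \max_k h_{lk}$ in general, but here the outer $f$ is already written as $\sum_l \max_k g_{lk}(\mathbf{x},\boldsymbol{\xi}_l)$, so the per-block maximization is intrinsic to the hypothesis and no interchange is needed). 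Beyond this decoupling step, everything else is a coordinate-wise replay of Theorem~\ref{T1}, and the reformulation of the remaining $\lambda$- and $\eta$-block dualities is inherited verbatim from the proof of (P1), so no further work is required there.
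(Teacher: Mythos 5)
Your proposal is correct and follows essentially the same route as the paper, which simply defers to the proof of Theorem 6.1 in \cite{MohajerinEsfahani2018}: that argument likewise exploits the rectangularity of $\Xi_i$ together with the separability of $f$ and of the norm $\left\|\cdot\right\|_d$ to split the supremum in constraint~\eqref{cons_sup_norm_1} into per-block suprema, and then applies the conjugate-duality reformulation of Theorem~\ref{T1} blockwise. You have merely written out the details that the paper omits by citation, and your key decoupling identity and the substitution $t_{i,j}=\sum_{l=1}^d \omega_{ijl}$ are both sound.
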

\begin{proof}The proof runs in a similar way to that of Theorem 6.1 in \cite{MohajerinEsfahani2018}.  \qed
\end{proof}
\textbf{Remarks.}
If the transportation cost function $c$ is not a norm, there are still some cases where the constraint
\eqref{P1_c} can be reformulated in a tractable way. In general, equation \eqref{P1_c} can be seen as the robust counterpart of a constraint affected by the random parameter vector $\boldsymbol{\xi}$, with $\Xi_i$ playing the role of the so-called \emph{uncertainty set}. In our case, the tractability of \eqref{P1_c} depends on the nature of each set $\Xi_i$ and each function $\alpha_{ij}(\boldsymbol{\xi}):= f(\mathbf{x},\boldsymbol{\xi})-\theta
      c(\boldsymbol{\xi}, \widehat{\boldsymbol{\xi}}^{\, i}_j)$. Indeed, suppose that every $\Xi_i$ is a closed convex set, then:

\begin{itemize}
    \item If the function $f$ is concave in $\boldsymbol{\xi}$, so is each function  $\alpha_{ij}(\boldsymbol{\xi})$ (recall that the transportation cost function $c$ is assumed to be convex and that $\theta$ is non-negative). As \cite{Roos2018a} points out, this is a tractable instance and tractable reformulations of constraint \eqref{P1_c}  can be obtained using Fenchel duality following the guidelines in \cite{Ben-Tal2015}.
    \item In contrast, the case in which some $\alpha_{ij}(\boldsymbol{\xi})$ are convex is much more challenging and may call for approximation methods such as the one proposed by \cite{Roos2018a}.
\end{itemize}

In any case, we need to compute convex conjugate functions, which is, in itself, a complicated problem in general. For assistance in this regard, one may resort to symbolic computation in order to get closed formulas for
convex conjugate functions (see, for example, \cite{Borwein2009}).

\subsection{Order cone constraints}\label{order_conic_section}
To account for \emph{a-priori} knowledge about the probability distribution of the random parameter vector $\boldsymbol{\xi}$ (for example, the decision maker may have some information about the shape of this distribution), we propose to convey this knowledge using order constraints on the probability masses $p_i$ associated with each subregion $\Xi_{i}$ into which the support $\Xi$ of $\boldsymbol{\xi}$ is partitioned. These order constraints are based on order cones, which, in turn, can be represented in the form of graphs.

We can build order cones from graphs that allow for the comparison of all probabilities $p_i$. In that case, we say that the graph, and the associated cone, establish a \emph{total order}. If, on the contrary, the graph only allows  some of those probabilities to be compared, we talk about \emph{partial order}. For more details about order cones we refer the reader to \cite{NEMETH201680}.

Below, we present  some common choices of order cones.


\begin{itemize}
    \item \emph{Simple order cone (monotonicity):}
    $$\mathcal{C}=\{ p \in \mathbb{R}^{|\mathcal{I}|} : p_1 \geqslant \ldots \geqslant p_{|\mathcal{I}|} \}$$
    \item \emph{Tree order cone:}
     $$\mathcal{C}=\{ p \in \mathbb{R}^{|\mathcal{I}|} : p_i \geqslant  p_{|\mathcal{I}|},\; i=1,\ldots, |\mathcal{I}|-1 \}$$
    \item \emph{Star-shaped cone (decrease on average):}
     $$\mathcal{C}=\Big\{ p \in \mathbb{R}^{|\mathcal{I}|} : p_1 \geqslant \frac{p_1+p_2}{2}\geqslant \ldots \geqslant \frac{p_1+\ldots+p_{|\mathcal{I}|}}{|\mathcal{I}|} \Big\}$$

    \item \emph{Umbrella cone (unimodality):}
     $$\mathcal{C}=\{ p \in \mathbb{R}^{|\mathcal{I}|} : p_1 \leqslant p_2 \leqslant \ldots \leqslant p_m \geqslant p_{m+1}  \geqslant \ldots \geqslant p_{|\mathcal{I}|}  \}$$
\end{itemize}

An order cone is a \emph{polyhedral convex cone} and as such, can be algebraically expressed in the form
$\mathcal{C} = \{\mathbf{p} \in \mathbb{R}^{|\mathcal{I}|}: \mathbf{A}\mathbf{p} \geqslant 0\}$,
with $\mathbf{A}$ being a matrix of appropriate dimensions. Its dual $\mathcal{C}^*$
can, therefore, be easily computed as $\mathcal{C}^{*} = \{\mathbf{\widetilde{p}} = \mathbf{A}^T
\boldsymbol{\nu}: \boldsymbol{\nu} \geqslant \mathbf{0}\}$ (see, for instance, Corollary 3.12.9 in \cite{Silvapulle2011}). Notwithstanding, our DRO approach can be equally applied under other types of support sets, as long as the problem \eqref{conic_problem1} admits a strong dual (we refer the interested reader to \cite{Ben-Tal2013} for a list of types of support sets under which strong duality holds).


As compared to other approaches available in the technical literature, order cones provide a straightforward way of encoding modality information in the ambiguity set of the DRO problem. For instance, \cite{Hanasusanto2015} indirectly introduces multi-modality information by imposing first and second moment conditions on the different ambiguous components of a mixture with known weights. Their approach, however, results in a semidefinite program. Unlike \cite{Hanasusanto2015}, the authors in \cite{Li2019mapr} explicitly incorporate modality information into their ambiguity set through moment and generalized unimodal constraints. Nonetheless, they still need to solve a semidefinite program and their DRO approach overlooks the data-driven nature of those constraints. In \cite{Chen2019}, they construct an ambiguity set made up of those absolutely continuous probability distributions whose density function is bounded by some bands with a certain confidence level. Their approach can be used to impose monotonicity or unimodality of the probability distributions, but can only be applied to the univariate case.

Beyond modality, the order cone constraints on the partition probabilities that characterize our DRO approach equip the decision maker with a versatile and intuitive framework to exploit information on the shape of the ambiguous probability distribution. For example, as we do in the numerical experiments in Section~\ref{numerics}, we can construct an order cone that constrains the ratios among the partition probabilities, which can be seen as a discrete approximation of encoding ``derivative'' information on the ambiguous probability distribution (if this admits a density function). Likewise, other order cones could be used to bestow some sense of ``convexity" on this distribution.





\section{On convergence and out-of-sample performance guarantees}\label{theory_section}

In this section, we show that our DRO approach (P) naturally inherits the convergence and performance guarantees of that introduced in \cite{MohajerinEsfahani2018}. For this purpose, we first need to recall some terminology and concepts
from this paper to which we will resort later on. Throughout this section,
we denote the
training data sample (that is, the sample path sequence) as   $\widehat{\Xi}_N:=\{
\widehat{\boldsymbol{\xi}^i} \}_{i=1}^N \subseteq \Xi$. Following
\cite{MohajerinEsfahani2018}, $\widehat{\Xi}_N$ can be seen
as a random vector governed by the probability distribution
$\mathbb{P}^N:=Q^* \times \cdots \times Q^*$ ($N$ times)
supported on $\Xi^N$ (with the respective product $\sigma$-algebra).

In the remainder of this paper, we will denote
 the optimization problem  associated with problem $(P)$
 under the true probability distribution $Q^*$ as (${\rm P}^*$) (that is,
 the problem defined as $J^*:=\inf_{\mathbf{x} \in X}
 \mathbb{E}_{Q^*} [f(\mathbf{x},
 \boldsymbol{\xi})]$).
We then say that a \emph{data-driven solution}
  for problem (${\rm P}^*$) is a feasible solution $\widehat{\mathbf{x}}_N \in X$ which is constructed
  from the sample data. Furthermore, the \emph{out-of-sample performance} of a data-driven solution $\widehat{\mathbf{x}}_N
  $ is defined as $\mathbb{E}_{Q^*} [f(\widehat{\mathbf{x}}_N ,\boldsymbol{\xi})]$.

%
In line with \cite{MohajerinEsfahani2018}, given a data-driven solution  $\widehat{\mathbf{x}}_N$,  a \emph{finite sample guarantee} is a relation in the form
 \begin{equation}\label{bounds_finite_sample_guarantee}
      \mathbb{P}^N\Big[ \widehat{\Xi}_N   \; : \;\mathbb{E}_{Q^*}[f(\widehat{\mathbf{x}}_N ,\boldsymbol{\xi})]\leqslant \widehat{J}_N\Big]
      \geqslant 1-\beta
  \end{equation}
  where $\widehat{J}_N$ is a \emph{certificate} for the out-of-sample
performance of $\widehat{\mathbf{x}}_N$ (i.e., an upper bound that is generally contingent on the training
dataset),   $\beta \in (0,1) $ is a \emph{significance parameter} with respect to the
distribution $\mathbb{P}^N$, on which both $\widehat{\mathbf{x}}_N$  and $\widehat{J}_N$ depend. Moreover, we refer to the probability on the left-hand side
of \eqref{bounds_finite_sample_guarantee} as  the \emph{reliability} of $(\widehat{\mathbf{x}}_N, \widehat{J}_N)$.

Ideally, we strive to develop a method capable of identifying a highly reliable data-driven solution with a certificate as low as possible.

The data-driven DRO approach that we propose in this paper to address problem (${\rm P}^*$) accounts for the uncertainty about the true data-generating distribution $Q^*$, while  taking advantage of some a-priori \emph{order} information that the decision maker may have on some probabilities induced by $Q^*$ over a partition of the support set $\Xi$.
Below, we claim  that the pair ($\widehat{\mathbf{x}}_N$, $\widehat{J}_N$) provided by our distributionally robust optimization problem (P) features performance guarantees in line with those discussed in \cite{MohajerinEsfahani2018}. More specifically, for a suitable choice of the ambiguity set, the optimal value $\widehat{J}_N$ of problem (P) constitutes a certificate of the type \eqref{bounds_finite_sample_guarantee} providing a confidence level $1-\beta$ on the out-of-sample performance of the data-driven solution $\widehat{\mathbf{x}}_N$. This can be formally stated under some assumptions about the underlying true conditional probability distributions.

To this end, we first provide probabilistic guarantees on the partition probabilities $p_i$, $\forall i \leqslant |\mathcal{I}|$. In this vein, note that the empirical probability $\widehat{p}_i$, defined as in Equation~\eqref{nominal_vector_masses}, can be modeled as a binomial distribution with success probability $p_i^*$, divided by the total number of trials. Consequently, by the Strong Law of Large Numbers (SLLN), $\widehat{p}_i$ converges to $p_i^*$ almost surely.

Now suppose that we choose a $\phi$-divergence as $\widetilde{c}$, where $\phi$ is a twice continuously differentiable function around 1 with $\phi''(1)>0$. Then, take $\beta_p>0$. If we choose  as $\rho$ the value
\begin{equation}\label{conf_set_divergence}
\rho(\beta_p):=(\phi''(1)/(2N))\chi^2_{|\mathcal{I}|-1,1-\beta_p}
\end{equation}
we get a confidence set of level $1-\beta_p$  on the true partition probabilities $\mathbf{p}^*$ (see \cite{Ben-Tal2013} and \cite{Bayraksan2015}).

If, alternatively, we choose the total variation distance as $\widetilde{c}$, we can use Equation (19) in \cite{Guo2019} to take  $\rho$ as
\begin{equation}\label{conf_set_total_var}
\rho(\beta_p):= (|\mathcal{I}|/\sqrt{N})(2+\sqrt{2\log(|\mathcal{I}|/\beta_p)})
\end{equation}
and obtain a confidence set of level $1-\beta_p$ on $\mathbf{p}^*$.

Next we establish  a concentration tail inequality of the probability weighted Wasserstein
metric of order 1 between each conditional distribution and its respective true conditional distribution. For this purpose, we first need  to make the following assumption:
\begin{assumption}[Light-tailed Conditional  Distributions]\label{assumption_light_tailed_cond}
For each $i \in \mathcal{I}$, there exist $a_i,\gamma_i \in \mathbb{R}$, with $a_i>1$ and $\gamma_i>0$ such that
\begin{align}
\mathbb {E}_{Q_i^*}\big [ \exp (\gamma_i \|\boldsymbol{\xi}\|  ^{a_i}) \big ]
= \int _{\Xi_i }
\exp (\gamma_i \|\boldsymbol{\xi}\|  ^{a_i})\,Q_i^*
(\mathrm {d}\boldsymbol{\xi }) < \infty .
\end{align}

\end{assumption}

The following theorem provides a tail concentration inequality for the weighted sum of the
Wasserstein
metrics of order 1 between the true and empirical conditional distributions.
\begin{theorem}[Concentration Inequality for the Conditional Distributions
]\label{th_conc_ineq_cond}
If Assumption \ref{assumption_light_tailed_cond} holds,  for each $i \in
\mathcal{I}$, given $\beta_i \in (0,1]$  we have that
$\forall N_i \geqslant 1$, $\dim (\boldsymbol{\xi}) \neq 2$ and for all $\varepsilon > \sum_{i \in \mathcal{I}}p_i\varepsilon _{N_i}(\beta_i ) $, for any values $p_i, i \in \mathcal{I}$ such that $p_i\geqslant0$ and $ \sum_{i\in \mathcal{I}}p_i=1$, the following holds
\begin{align}
\mathbb {P} \left[ \sum_{i \in \mathcal{I}} p_i \mathcal{W}(Q_i^*,\widehat{Q}_i\big ) \leqslant \varepsilon \right] \geqslant 1-\sum_{i \in \mathcal{I}}\beta_i \label{conc_ineq_cond_weight}
\end{align}
where
\begin{align} \varepsilon _{N_i}(\beta_i ) {:=}\left\{ \begin{array}{ll} \Big ({\log (B_i\beta_i ^{-1}) \over C_iN_i}
\Big )^{1/\max\{\dim (\boldsymbol{\xi}),2 \}} &{} \quad \text {if } N_i \ge {\log (B_i\beta_i ^{-1}) \over C_i}, \\ \Big ({\log (B_i\beta_i ^{-1}) \over C_iN_i} \Big )^{1/a_i} &{} \quad \text {if } N_i < {\log (B_i\beta_i ^{-1}) \over C_i}. \end{array}\right. \end{align}
\end{theorem}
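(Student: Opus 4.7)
The plan is to reduce the claim to a per-partition concentration inequality (of Fournier--Guillin type, exactly as used in \cite{MohajerinEsfahani2018}, Theorem~3.5) and then glue the per-partition statements together via a union bound and the linearity of the convex combination $\sum_i p_i(\cdot)$.

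First, I would fix $i \in \mathcal{I}$ and a sample size $N_i \geq 1$, and recall that by construction $\widehat{Q}_i = \frac{1}{N_i}\sum_{j=1}^{N_i}\delta_{\widehat{\boldsymbol{\xi}}_j^{\,i}}$ is the empirical measure of the $N_i$ samples that fall in $\Xi_i$. Conditional on $N_i$, these $N_i$ samples are i.i.d.\ draws from the true conditional distribution $Q_i^*$, which satisfies the light-tail bound in Assumption~\ref{assumption_light_tailed_cond} with parameters $a_i,\gamma_i$. Therefore the Fournier--Guillin measure-concentration inequality (see the form employed in Theorem~3.5 of \cite{MohajerinEsfahani2018}) applies, yielding constants $B_i,C_i>0$ (depending only on $a_i,\gamma_i$, $\dim(\boldsymbol{\xi})$ and $\mathbb{E}_{Q_i^*}[\exp(\gamma_i\|\boldsymbol{\xi}\|^{a_i})]$) such that
\begin{equation*}
\mathbb{P}\!\left[\mathcal{W}(Q_i^*,\widehat{Q}_i) \leq \varepsilon_{N_i}(\beta_i)\right] \;\geq\; 1-\beta_i,
\end{equation*}
with $\varepsilon_{N_i}(\beta_i)$ given exactly by the piecewise expression in the statement. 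The case split $N_i \gtrless \log(B_i\beta_i^{-1})/C_i$ and the exclusion $\dim(\boldsymbol{\xi})\neq 2$ come straight from the Fournier--Guillin bound.

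Next, I would introduce the events $A_i := \{\mathcal{W}(Q_i^*,\widehat{Q}_i) \leq \varepsilon_{N_i}(\beta_i)\}$, for each $i \in \mathcal{I}$, so that $\mathbb{P}[A_i^c] \leq \beta_i$. On the intersection $\bigcap_{i \in \mathcal{I}} A_i$, since $p_i \geq 0$, one can multiply each inequality by $p_i$ and sum to obtain
\begin{equation*}
\sum_{i \in \mathcal{I}} p_i\,\mathcal{W}(Q_i^*,\widehat{Q}_i) \;\leq\; \sum_{i \in \mathcal{I}} p_i\,\varepsilon_{N_i}(\beta_i) \;<\; \varepsilon,
\end{equation*}
where the last strict inequality is exactly the hypothesis on $\varepsilon$. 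A standard Bonferroni/union-bound argument then gives
\begin{equation*}
\mathbb{P}\!\left[\,\bigcap_{i\in\mathcal{I}} A_i\right] \;\geq\; 1 - \sum_{i \in \mathcal{I}} \mathbb{P}[A_i^c] \;\geq\; 1 - \sum_{i \in \mathcal{I}} \beta_i,
\end{equation*}
which, combined with the previous display, yields \eqref{conc_ineq_cond_weight}.

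The only subtle point I expect to address carefully is the measurability and independence of the $A_i$'s when the partition counts $N_i$ are themselves random: the theorem is phrased ``$\forall N_i \geq 1$'', which I would read as a conditional statement given the vector of counts. Under that conditioning, the samples feeding each $\widehat{Q}_i$ are disjoint subsets of the original i.i.d.\ sample (since the $\Xi_i$ have pairwise disjoint interiors), so they are in fact independent across $i$ and each is i.i.d.\ from $Q_i^*$; this is exactly what is needed to invoke Fournier--Guillin partition-wise. Apart from this bookkeeping, the argument is essentially a weighted union bound on top of the single-sample concentration inequality already exploited in \cite{MohajerinEsfahani2018}.
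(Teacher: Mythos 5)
Your proposal is correct and follows essentially the same route as the paper: apply the Fournier--Guillin concentration inequality (Theorem 2 in \cite{Fournier2015}) to each conditional empirical measure $\widehat{Q}_i$ to get $\mathbb{P}[\mathcal{W}(Q_i^*,\widehat{Q}_i)\leqslant\varepsilon_{N_i}(\beta_i)]\geqslant 1-\beta_i$, then combine the events via a union bound and the nonnegativity of the weights $p_i$. Your remark on independence across partitions is harmless but unnecessary, since the union bound requires no independence.
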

\emph{Proof.} Given Assumption 3, for all $i \in \mathcal{I}$, we deduce from Theorem 2 in \cite{Fournier2015} that
$$\mathbb {P}  \left[  \mathcal{W}(Q_i^*,\widehat{Q}_i ) \leqslant
\varepsilon _{N_i}(\beta_i ) \right] \geqslant 1- \beta_i.$$
Thus, we have that

\begin{align}
\mathbb{P} \left[ \sum_{i \in \mathcal{I}}p_i \mathcal{W}(Q_i^*,\widehat{Q}_i )
\leqslant \sum_{i \in \mathcal{I}}p_i \varepsilon _{N_i}(\beta_i )
\right] \geqslant & \;
\mathbb{P} \left[ \bigcap_{i \in \mathcal{I}} \left( p_i\mathcal{W}(Q_i^*,\widehat{Q}_i )\leqslant p_i  \varepsilon _{N_i}(\beta_i ) \right) \right]\\
=& 1- \mathbb{P} \left[ \bigcup_{i \in \mathcal{I}} \left( p_i\mathcal{W}(Q_i^*,\widehat{Q}_i )> p_i  \varepsilon _{N_i}(\beta_i ) \right) \right]\\
&\geqslant 1- \sum_{i \in \mathcal{I}}\mathbb{P} \left[p_i\mathcal{W}(Q_i^*,\widehat{Q}_i )> p_i  \varepsilon _{N_i}(\beta_i ) \right]\\
&\geqslant 1-\sum_{i \in \mathcal{I}}\beta_i
\end{align}
Theorem~\ref{th_conc_ineq_cond} sets the probabilistic bound
$\sum_{i \in \mathcal{I}}p_i \varepsilon _{N_i}(\beta_i )$
on the weighted
Wasserstein metric of order 1 between each conditional distribution and its respective true conditional distribution, with  at least  confidence level $1-\sum_{i \in \mathcal{I}}\beta_i$. We remark that, if the partitions are compact, stronger results like those in Theorem 2 of \cite{JiLejeune2020} could be used to choose the radii of the Wasserstein balls. More specifically, the result in Theorem 2 of \cite{JiLejeune2020} depends on the diameter of the compact support set (i.e., the maximum distance between two elements of that set). The result stated in our theorem, in contrast, is valid for unbounded partitions, as it only requires the true conditional distribution associated with each partition be light-tailed.
The next theorem states the finite-sample guarantee performance of the proposed DRO method we develop in this paper:

\begin{theorem}[Finite sample guarantee]\label{finite_sample_theorem}
Suppose that Assumption \ref{assumption_light_tailed_cond}  holds and that we have chosen as $\rho$ the value
given by Equation~\eqref{conf_set_divergence} or \eqref{conf_set_total_var}.
Then, the finite sample guarantee \eqref{bounds_finite_sample_guarantee}
holds with at least confidence level $(1-\beta_p)(1-\sum_{i \in \mathcal{I}}\beta_i)$.
\end{theorem}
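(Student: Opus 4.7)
The plan is to show that, with the prescribed probability, the true distribution $Q^* = \sum_{i\in\mathcal{I}} p_i^{*}Q_i^{*}$ lies in the ambiguity set $\mathcal{U}_{\rho,\varepsilon}(\widehat{Q})$ constructed from the data, so that the optimal value $\widehat{J}_N$ of (P) upper-bounds $\mathbb{E}_{Q^*}[f(\widehat{\mathbf{x}}_N,\boldsymbol{\xi})]$ by the very definition of the distributionally robust problem. This reduces the theorem to a probabilistic statement about when $Q^* \in \mathcal{U}_{\rho,\varepsilon}(\widehat{Q})$, which factors naturally into a statement about the partition probabilities and a statement about the conditional distributions.

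First, I would introduce the two events
\begin{equation*}
A := \bigl\{\widetilde{c}(\mathbf{p}^{*}-\widehat{\mathbf{p}})\leqslant \rho\bigr\}, \qquad
B := \Bigl\{\sum_{i\in\mathcal{I}} p_i^{*}\,\mathcal{W}(Q_i^{*},\widehat{Q}_i)\leqslant \varepsilon\Bigr\}.
\end{equation*}
By the choice of $\rho$ according to Equation~\eqref{conf_set_divergence} or \eqref{conf_set_total_var} (and the cited results of \cite{Ben-Tal2013,Bayraksan2015,Guo2019}), $\mathbb{P}^N[A]\geqslant 1-\beta_p$. By Theorem~\ref{th_conc_ineq_cond} applied with the true partition probabilities $p_i = p_i^*$, $\mathbb{P}^N[B]\geqslant 1-\sum_{i\in\mathcal{I}}\beta_i$. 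On the event $A\cap B$, the choice $Q_i = Q_i^{*}$ and $\mathbf{p}=\mathbf{p}^{*}$ is feasible for the constraints \eqref{contraint_prob}--\eqref{constraint_ball_3} defining the ambiguity set, hence $Q^{*}\in \mathcal{U}_{\rho,\varepsilon}(\widehat{Q})$, and therefore $\mathbb{E}_{Q^{*}}[f(\widehat{\mathbf{x}}_N,\boldsymbol{\xi})]\leqslant \sup_{Q\in\mathcal{U}_{\rho,\varepsilon}(\widehat{Q})}\mathbb{E}_{Q}[f(\widehat{\mathbf{x}}_N,\boldsymbol{\xi})]=\widehat{J}_N$.

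The step I expect to require the most care is the independence of $A$ and $B$, needed to combine the two bounds into the product $(1-\beta_p)(1-\sum_{i\in\mathcal{I}}\beta_i)$. The decomposition I would exploit is the standard one for i.i.d.\ samples partitioned according to cells $\Xi_i$: the vector of counts $(N_1,\ldots,N_{|\mathcal{I}|})$ follows a multinomial law, and conditional on these counts the within-cell samples $\{\widehat{\boldsymbol{\xi}}_j^{\,i}\}_{j=1}^{N_i}$ are mutually independent across $i$ and, for each $i$, i.i.d.\ from the conditional law $Q_i^{*}$. The event $A$ depends only on the counts (through $\widehat{\mathbf{p}}$), whereas the bound in Theorem~\ref{th_conc_ineq_cond} is applied conditionally on the counts, where it yields a probabilistic statement about $B$ that depends on the counts only through the thresholds $\varepsilon_{N_i}(\beta_i)$. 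Taking expectations over the multinomial counts then gives $\mathbb{P}^N[A\cap B]\geqslant \mathbb{P}^N[A]\,\mathbb{P}^N[B\mid\text{counts}]\geqslant (1-\beta_p)(1-\sum_{i\in\mathcal{I}}\beta_i)$ on the set where all $N_i\geqslant 1$.

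Finally, I would handle the boundary case $i\in I'$ (empty cells) by noting that the modeling choice of placing the artificial atom at $\boldsymbol{\widehat{\xi}}^{\,i}_1:=\arg\sup_{\boldsymbol{\xi}\in\Xi_i} f(\mathbf{x},\boldsymbol{\xi})$ makes the inner supremum in (SP) attain its worst-case value over $\Xi_i$, so the inclusion $Q^{*}\in\mathcal{U}_{\rho,\varepsilon}(\widehat{Q})$ is not needed for those cells in order to bound $\mathbb{E}_{Q^{*}}[f(\widehat{\mathbf{x}}_N,\boldsymbol{\xi})]$ from above by $\widehat{J}_N$; hence the claim extends from the joint event $A\cap B$ to $\mathbb{E}_{Q^{*}}[f(\widehat{\mathbf{x}}_N,\boldsymbol{\xi})]\leqslant \widehat{J}_N$, establishing the finite-sample guarantee \eqref{bounds_finite_sample_guarantee} at the stated confidence level.
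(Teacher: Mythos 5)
Your proposal is correct and follows essentially the same route as the paper: show that $Q^*\in\mathcal{U}_{\rho,\varepsilon}(\widehat{Q})$ with probability at least $(1-\beta_p)(1-\sum_{i\in\mathcal{I}}\beta_i)$, then conclude from the definition of the sup that $\mathbb{E}_{Q^*}[f(\widehat{\mathbf{x}}_N,\boldsymbol{\xi})]\leqslant\widehat{J}_N$ on that event. The one place you go beyond the paper is welcome: the paper simply asserts the product form of the confidence level, whereas a naive union bound over your events $A$ and $B$ would only give $1-\beta_p-\sum_{i\in\mathcal{I}}\beta_i$, and your conditioning-on-the-multinomial-counts argument (with $A$ measurable with respect to the counts and Theorem~\ref{th_conc_ineq_cond} applied conditionally) is exactly what is needed to legitimize the product.
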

\emph{Proof.}
The claim follows from Theorem \ref{th_conc_ineq_cond} and Equations~\eqref{conf_set_divergence} and \eqref{conf_set_total_var}, which imply
that $\mathbb{P}(Q^* \in
\mathcal{U}_{\rho,\varepsilon} (\widehat{Q}))
\geqslant (1-\beta_p)(1-\sum_{i \in \mathcal{I}}\beta_i)$. Hence,
$$\mathbb{E}_{Q^*}[f(\widehat{\mathbf{x}}_N ,\boldsymbol{\xi})]\leqslant \sup_{Q \in
\mathcal{U}_{\rho,\varepsilon} (\widehat{Q}) }
\mathbb{E}_{Q}[f(\widehat{\mathbf{x}}_N ,\boldsymbol{\xi})]=\widehat{J}_N$$ with
probability at least $(1-\beta_p)(1-\sum_{i \in \mathcal{I}}\beta_i)$.

\textbf{Remarks.}
In practice, proper values for
$\varepsilon$ and $\rho$  are set by way of data-driven procedures   like bootstrapping or cross-validation, as we illustrate in the numerical experiments in Section 4.2 (see also \cite{JMLRRuidiChen}, \cite{cisneros20a}, \cite{MohajerinEsfahani2018}, \cite{JMLRShafieezadeh-Abadeh}, and \cite{Xie2021mapr} for more examples). These procedures allow the decision maker to tune those parameters as a function of  sample size $N$ in order to get reliable decisions without giving up too much on out-of-sample performance. Following this line, and as noted in Remark 5 in \cite{Kuhn2019},
    the requirement to include the true distribution inside the ambiguity set is only a sufficient, but \emph{not necessary} condition to ensure a finite sample guarantee. Indeed, this guarantee can be sustained even if the parameters of the ambiguity set  are reduced below the lowest values for which the ambiguity set represents a confidence set for the true distribution.

Furthermore, recall that the partition probabilities $\mathbf{p}$ belong to the support set $\Theta$ defined by the order cone constraints. Since we assume that these constraints are coherent with the true distribution $Q^*$, we do not need to explore those probability measures $Q$ in the Wasserstein ball $\mathbb{B}_{\rho_N (\beta)}$ that do not comply with them. Consider, for example, the case in which the worst-case
distribution in the ball $\mathbb{B}_{\rho_N (\beta)}$ does not satisfy the order cone constraints. One could expect, therefore, that, in practice, our approach could benefit from this fact to produce a data-driven solution $\widehat{\mathbf{x}}_N$ as reliable as that given by the method of \cite{MohajerinEsfahani2018}, but with a tighter certificate $\widehat{J}_N$. This is precisely what we observe in the numerical experiments that we present below.

We conclude this section with some remarks on the convergence and asymptotic consistency of our DRO approach: We have that, as the number $N$ of samples grows to infinity,
     \begin{equation}
        (\widehat{\mathbf{x}}_N,\widehat{J}_N) \rightarrow (\mathbf{x}^*,J^*)
     \end{equation}
     where $\mathbf{x}^*$ (resp. $J^*$)  is an optimizer (resp. the optimal solution value) of problem (${\rm P}^*$).

     Indeed, assume that Theorem 3.6 in \cite{MohajerinEsfahani2018} holds, then take a confidence level $1-\beta$, and choose $\varepsilon$ and $\rho$ by way of
     Theorem \ref{th_conc_ineq_cond} and Equations~\eqref{conf_set_divergence} (or \eqref{conf_set_total_var}), respectively. When $N$ grows to infinity, we have, on the one hand, that the  conditional distributions
     converge (in the Wasserstein metric) to their respective true conditional distributions and the probability weights converge a.s. by the SLLN to their respective true values.
     Therefore, both $\varepsilon$ and $\rho$ tend to zero as $N$ increases to infinity. Consequently,  our ambiguity set only contains the empirical distribution $\widehat{Q}_N$, which  converges almost surely to the true distribution $Q^*$.





\section{Numerical experiments}\label{numerics}
The following simulation experiments are designed to provide additional
insights into
the performance guarantees of our proposed distributionally robust
optimization scheme with order cone
constraints. For this purpose, we consider two test instances: the (single and multi-item) newsvendor problem and the problem of a strategic firm competing \`a la Cournot in a market.
 These two problems have been intentionally selected, because they are qualitatively different when addressed by the standard Wasserstein-metric-based DRO approach proposed in \cite{MohajerinEsfahani2018}. In effect, the former features an objective function $f(\mathbf{x},\boldsymbol{\xi})$ whose Lipschitz  constant with respect to $\boldsymbol{\xi}$ is independent of the decision $\mathbf{x}$. Consequently, as per Remark 6.7 in \cite{MohajerinEsfahani2018}, the standard Wasserstein-metric-based DRO approach renders the same minimizer for this problem as the sample average approximation, whenever the support of the uncertainty $\boldsymbol{\xi}$ is unbounded. This is, in contrast, not true for the problem of a strategic firm competing \`a la Cournot in a market, which is characterized by an objective function with a Lipschitz constant over $\boldsymbol{\xi}$ that is a function of $\mathbf{x}$.
 This  allows us to highlight the differences of our approach with regard to \cite{MohajerinEsfahani2018} in two distinct settings.

All the numerical experiments have  been implemented in Python. The optimization problems have been built using Pyomo \cite{Pyomo}  and solved with CPLEX 12.10 \cite{CPLEX}  on a PC with Windows 10 and a CPU Intel (R) Core i7-8550U clocking at 1.80 GHz and with 8 GB of RAM. The statistical methods that have been used for the numerical experiments have been coded by means of the module Scikit-learn (see  \cite{Pedregosa:2011:SML:1953048.2078195}). In what follows we provide  some implementation details regarding the proposed model.
The numerical experiments have been designed under the following assumptions:
\begin{enumerate}

    \item \emph{A-priori information}. Given a fixed and known partition of the sample space $\Xi$, we
    can construct an order cone that is consistent with the true probability
    distribution. That is,   the probability masses that the true distribution assigns to each partition
    verify the order cone constraints. In practice, this a-priori information is determined by the nature of the problem and the random phenomena, and is assumed to be known by the decision maker based on experience and expert knowledge.  Furthermore, in the case that the decision maker has no full certainty about the a-priori information, s/he may resort to statistical hypothesis testing to assess the confidence that the partition probabilities belong to a given order cone (see, for instance, \cite{Bhattacharya1997} and references therein).

    In our
    numerical experiments, we specifically apply the following approach: Given a fixed
    number of partitions (later we explain how the partition set is obtained), we consider that the
    decision maker knows a total order between the probability masses associated with each of the regions into which the sample space $\Xi$ is split. Furthermore, s/he also knows their ratios approximately, within a certain tolerance (which, in the subsequent experiments, we set to 0.1).

    For instance, suppose we have three partitions with (true) probability masses of $p_1^* = 0.6$, $p_2^* = 0.3$ and $p_3^* = 0.1$. The decision maker only knows their relative ratios with a tolerance error of 0.1, that is:
\begin{align*}
    p_1&\geqslant (0.6/0.3 -0.1)p_2\\
    p_2 & \geqslant (0.3/0.1-0.1)p_3
\end{align*}
   This way, we get the following  order cone constraints:
   \begin{align*}
    p_1&\geqslant 1.9p_2\\
    p_2 & \geqslant 2.9p_3
\end{align*}
    \item \emph{Support set $\Xi$}. The support set is the Cartesian product of
    closed intervals (that is, an hypercube, whose size is indicated in each example) and, therefore, is
    a closed convex set.

    \item \emph{True distribution}. For simulation and analysis, the data-generating distribution is approximated by a certain number of  data points (15\,000 in the newsvendor setting and 10\,000 in the problem of the Cournot producer) drawn from a mixture of three normal distributions, whose characteristics are specified in each of the two examples we consider in the following subsections. Furthermore, those data points that fall outside the support set $\Xi$ are discarded.

    \item \emph{Construction of partitions $\Xi_i$, $i = 1, \ldots, |\mathcal{I}|$:} In order to construct the partitions, we proceed as follows:
    \begin{enumerate}

    \item \emph{Clustering phase:} Firstly, we employ the $K$-means clustering technique to group the total number of data points that approximate the true data distribution into $K$ clusters. The number $K$ of clusters is decided upon using the well-known Elbow's method (see, for example, \cite{Dangeti:2017:SML:3164859}). It is based on the value of the average distortion produced by different
values of $K$. If $K$ increases, the average distortion will decrease and  the improvement in average distortion will diminish.  The
value of $K$ at which the improvement in distortion decreases the most is called the \emph{elbow}. At this value of $K$,
we should stop dividing the data into further clusters and choose this value as the number of clusters. In addition, we assign a label to identify each of the $K$ clusters. In all the numerical experiments that are presented next,  the true data-generating distribution is constructed as a mixture of three (univariate or multivariate) normal distributions. We assume that the decision maker has a good estimate of the number of components of this mixture and thus, we consider, for example, four clusters, i.e., $K = 4$.

    \item  \emph{Decision-tree classifier phase:} Once all the clusters have been labelled, we use the aforementioned total number of data points to train a decision-tree multi-classifier with a maximum number of leafs equal to $K$. The tree will be then used to allocate new data points into one of the $K$ clusters, which, in effect, is equivalent to having a partition of the support set in $K$ disjoint regions.

    \end{enumerate}

\item  \emph{Comparative analysis:} We compare
three different data-driven approaches to address the solution to problem $\inf_{\mathbf{x}\in X} \  \mathbb{E}_{Q^*} \left[f(\mathbf{x},\boldsymbol{\xi})\right]$,
namely, our approach (DROC), the one of \cite{MohajerinEsfahani2018} (DROW) and the sample average approximation (SAA). Recall that we denote $x^* \in arg\,min_{\mathbf{x}\in X} \  \mathbb{E}_{Q^*} \left[f(\mathbf{x},\boldsymbol{\xi})\right]$ and $J^* = \mathbb{E}_{Q^*} \left[f(\mathbf{x^*},\boldsymbol{\xi})\right]$, which, in practice, are unknown to the decision maker, but, for analysis purposes, we estimate
using the total number of data points that approximate the true data-generating distribution.
Moreover, in all numerical experiments, we consider the 1-norm as the functions $c$ and $\widetilde{c}$.
To compare the three data-driven approaches we consider, we use two performance metrics, specifically, the \emph{out-of-sample performance} of the data-driven solution (which we also refer to as its
\emph{actual expected cost}) and its \emph{out-of-sample disappointment}. The former is given by $\mathbb{E}_{Q^*} \left[f(\widehat{\mathbf{x}}_{N}^{m},\boldsymbol{\xi})\right]$, while the latter is calculated as $J^* -\widehat{J}_{N}^{m}$, where $m = \{\textrm{DROC,}$ $ \textrm{DROW, SAA}\}$ and $\widehat{J}_{N}^{m}$ is the objective function value yielded by the data-driven optimization problem solved by method $m$. We stress that a negative out-of-sample disappointment represents a favourable outcome.
As $\mathbb{E}_{Q^*} \left[f(\widehat{\mathbf{x}}_{N}^{m},\boldsymbol{\xi})\right]$ and $\widehat{J}_{N}^{m}$ are random variables (they are direct functions of the sample data), we conduct a certain number of runs, each with an independent sample of size $N$. This way we can provide (visual) estimates of the expected value and variability of the out-of-sample performance and disappointment for several values of the sample size $N$. These estimates are illustrated in the form of box plots in a series of figures. In these figures,  the dotted black horizontal line corresponds to either  solution $x^*$ or to its associated optimal cost $J^*$ with complete information (i.e., without ambiguity about the true data distribution).

For the sole purpose of conducting a comparison as fairly as possible, parameters $\varepsilon$ and $\rho$ in both DROC and DROW are tuned so that the underlying true distribution of the data belongs to the corresponding ambiguity set with, at least, a pre-fixed confidence level of probability.  In the case of the newsvendor examples, we guarantee this by trial and error for simplicity. In practice, however,  these parameters should be calibrated by way of a (statistical) procedure that uses the data available to the decision maker, for example, through cross-validation or bootstrapping. We follow this approach in the problem of the Cournot producer.
Finally, we stress that, in our approach, caution should be exercised when selecting $\varepsilon$ and $\rho$, as they should be such that problem (P) has at least one feasible solution. This is not  guaranteed  in the case that the empirical distribution $\widehat{Q}$ does not satisfy the order cone constraints on the probability masses associated with each subregion $\Xi_i$ of the support set $\Xi$. Intuitively, in this case, optimization problem (P) must have enough ``budget'' (i.e., $\varepsilon$ and $\rho$ must be high enough)  to ``transport'' the empirical distribution to another one that complies with the a-priori information. In other words, the ambiguity set of problem (P) must be sufficiently large to contain at least one probability distribution that assigns probability masses verifying the order cone constraints to the partitions.

\end{enumerate}
\subsection{Newsvendor problems}
In this section, we illustrate the theoretical results of our paper on the popular \emph{newsvendor} problem (also known as
the \emph{newsboy problem}). Many extensions and variants of this problem have been considered since it was first posed in the 50s (see, for example, the work in \cite{gallego1993},  \cite{Choi2012}, \cite{Andersson2013}, \cite{Pando2014}, and references therein).
According to \cite{Pando2013},

\emph{The newsboy problem is probably the
most studied stochastic inventory model in inventory control theory and the
one with most extensions in recent years. This problem reflects many real-life situations and is often used to aid decision making in both
manufacturing and retailing. It is particularly important for items with
significant demand uncertainty and large over-stocking and under-stocking
costs.}

\subsubsection{The single-item newsvendor problem}\label{single-item}
In the single-item newsvendor model, the decision maker has to plan the
inventory level for a certain product before the random
demand $\xi$ for that product is realized, facing both holding and backorder  costs.
The newsvendor problem can be formulated as
$$\inf_{x \geqslant 0} \mathbb{E}_Q [ h(x-\xi)^+ + b(\xi-x)^+
]$$
where $x$ is the order quantity, and $b,h>0$ are the unit holding cost and the unit backorder cost, respectively. Here we have assumed that $h=4$  and $b=2$.

The demand for the item (unknown to the decision maker) is assumed to follow a mixture (with weights $\omega_1=0.1,\; \omega_2=0.35$ and $\omega_3=0.55$) of the three
normal distributions  $\mathcal{N}_1(0.2,0.05),\;\mathcal{N}_1(0.5,0.1),\;$ and $\mathcal{N}_1(0.8,0.05)\;$, truncated over the unit interval $[0,1]$. Figure~\ref{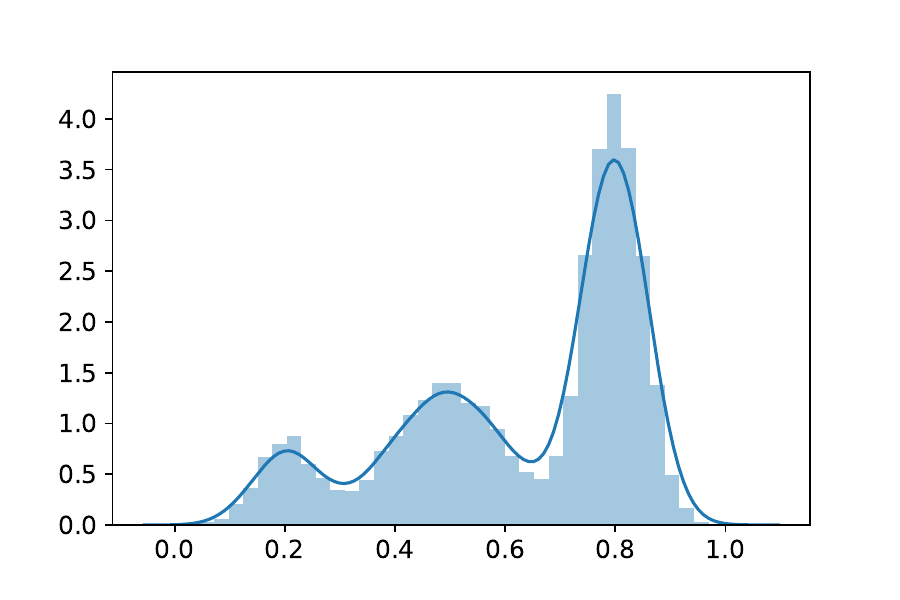} provides a visual illustration of the resulting mixture. Recall that, in the numerical experiments that follow, we have used 15\,000 samples drawn from this mixture of Gaussian distributions to approximate the true distribution of the item demand and to partition its support set $[0,1]$ into four regions, based on the two-phase procedure we have previously described. In fact, what we show in Figure~\ref{plot_distr_univ_real.pdf} is the histogram of those 15\,000 data points and its corresponding kernel density estimate.

For the sole purpose of conducting a comparison as fairly as possible,
parameters $\varepsilon$ and $\rho$ in both DROC and DROW are tuned so that the underlying true distribution
of the data belongs to the corresponding ambiguity set with
at least $95\%$ of probability. We check whether this condition holds or not \emph{a posteriori} (by trial and error), by counting the number of
runs (out of the one thousand we perform) for which the
out-of-sample disappointment
is negative.

The values we have used for the parameters $\varepsilon$ and $\rho$ in DROC and DROW are collated in Table~\ref{tab:radios_univ}. We insist that these parameters have been adjusted so that at most 50 out of the 1000 runs we have conducted for each sample size $N$ deliver a positive out-of-sample disappointment (that is, to achieve and maintain a similar level of reliability for the data-driven solutions given by DROC and DROW). As expected, therefore, the values of both $\varepsilon$ and $\rho$ decrease as the sample size $N$ grows.

\begin{table}[ht!]
\caption{Single-item newsvendor problem: Values for parameters $\varepsilon,\rho$ in DROC and $\rho$ in DROW.}
\label{tab:radios_univ}
\centering
\setlength\tabcolsep{3pt} 

\begin{tabular}{ccccc}
\hline
\multirow{2}{*}{$N$} &  \multicolumn{2}{c}{DROC}& DROW \\
\cline{2-3}
 &  $\varepsilon$ & $\rho$ & $\rho$ &\\
\hline
2  &  0.9 &  0.9 &  1&  \\
5  &  0.8 &  0.8 &  0.9&  \\
10  &  0.7 &  0.7&  0.8&  \\
20  &  0.4 &  0.6&  0.6&  \\
50  &  0.15 &  0.25&  0.4&  \\
100  &  0.1 &  0.2&  0.25&  \\
200  &  0.01 &  0.15&  0.05&  \\
\hline
\end{tabular}
\end{table}

Figures~\ref{optimal_order_quantity1_univariate_ray}, \ref{out-of-sample_univariate_ray}, and \ref{actual_expected_cost_univariate_ray} show the box plots corresponding to the order quantity, the out-of-sample disappointment and the actual expected cost delivered by each of the considered data-driven approaches for various sample sizes. The  shaded areas have been obtained by joining the whiskers of the box plots, while the associated solid lines link their medians. Interestingly, whereas the medians of the order quantity estimators provided by SAA are very close to the optimal one $x^*$, their high variability results in (large) disappointment with very high probability. On the contrary, the median of the order quantity delivered by DROW is significantly far from the optimal one (with complete information) for small sample sizes, but it manages to keep the out-of-sample disappointment below zero in return. To do so, however, DROW tends to produce costly (overconservative) solutions on average, as inferred from their actual expected cost in Figure~\ref{actual_expected_cost_univariate_ray}. In plain words, DROW pays quite a lot to ensure a highly reliable/robust order quantity. The proposed approach DDRO, however, is able to leverage the a-priori information on the partition probabilities $(p_i)_{i=1}^{|\mathcal{I}|}$ to substantially reduce the cost to pay for reliable data-driven solutions, especially for small sample sizes. Intuitively, this information enables DROC to identify highly reliable solutions that are myopically deemed as non-reliable and, therefore, discarded by DROW. Logically, this is contingent on the quality of the a-priori information that is supplied to DROC in the form of order  cone constraints on $(p_i)_{i=1}^{|\mathcal{I}|}$.

\begin{figure}[htp]
\centering

\subfloat[Data generating distribution (kernel density estimate)]{%
  \includegraphics[width=0.5\textwidth]{plot_distr_univ_real.pdf}%
  \label{plot_distr_univ_real.pdf}
}%
\subfloat[Order Quantity]{%
  \includegraphics[width=0.5\textwidth]{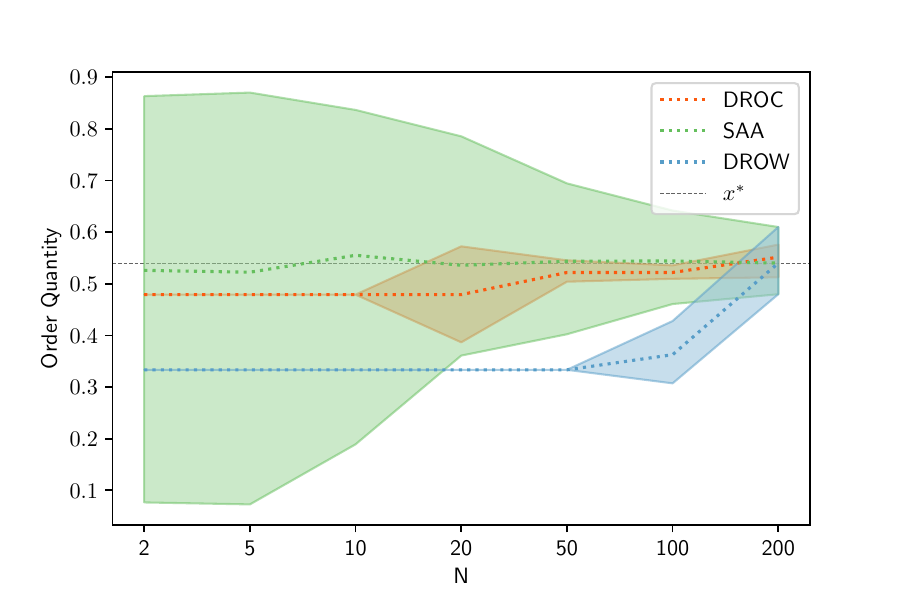}%
  \label{optimal_order_quantity1_univariate_ray}
}

\subfloat[Out-of-sample disappointment]{%
  \includegraphics[width=0.5\textwidth]{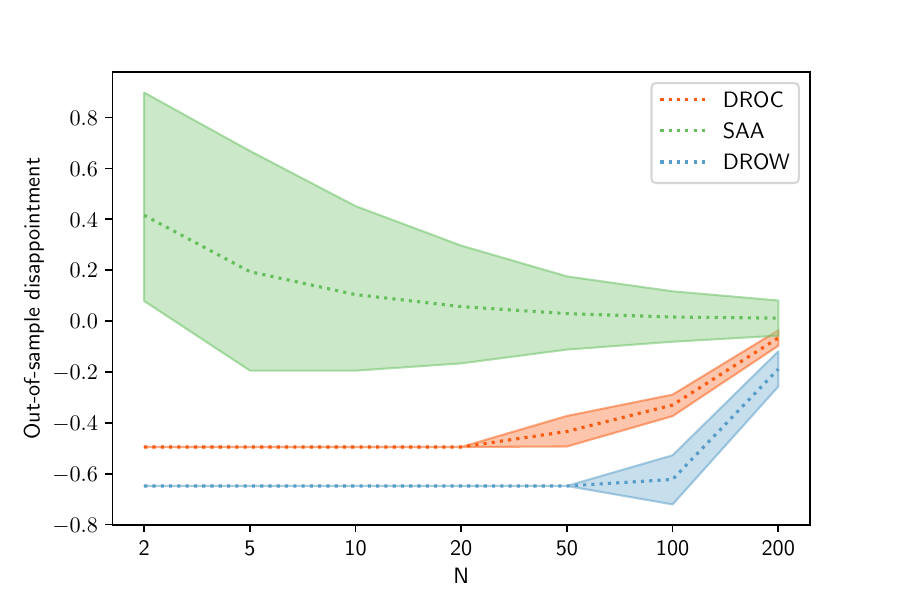}%
  \label{out-of-sample_univariate_ray}
}%
\subfloat[Actual expected cost]{%
  \includegraphics[width=0.5\textwidth]{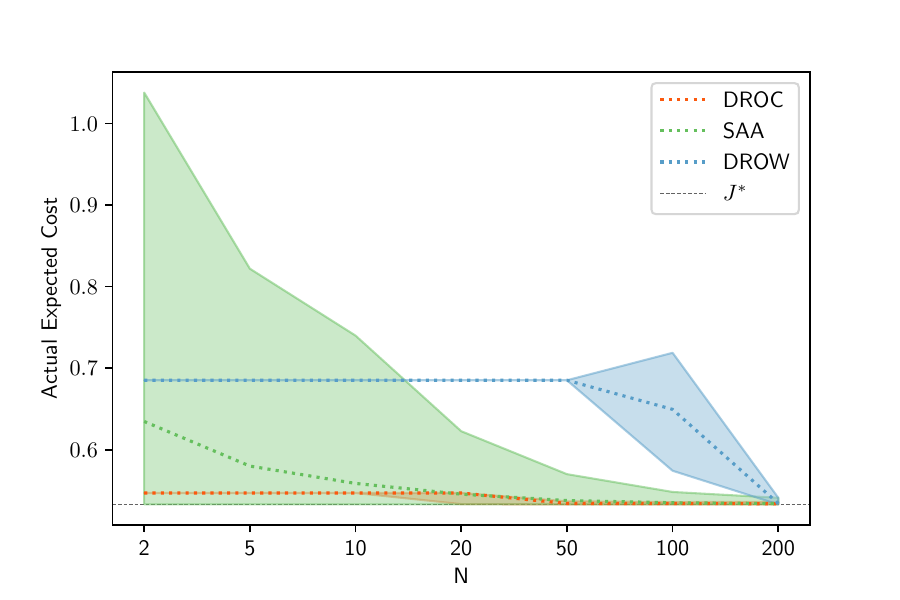}%
  \label{actual_expected_cost_univariate_ray}
}

\vspace{5mm}

\caption{Single-item newsvendor problem: (Approximate) true data-generating distribution, order quantity and performance metrics}

\end{figure}

\subsubsection{The multi-item newsvendor problem}
In this section, we carry out an analysis similar to that of Subsection~\ref{single-item}, but for the multi-item newsvendor problem, which can be formulated as follows:
$$\inf_{\mathbf{x} \geqslant \mathbf{0}} \mathbb{E}_Q \sum_{l=1}^d[ h_l(x_l-\xi_l)^+ + b_l(\xi_l-x_l)^+ ]$$
where $x_l$ is the order quantity for the $l$-th item, $Q$ is the joint probability distribution governing the demands for the $d$ items, and $b_l,h_l>0$ are the unit holding cost and the unit backorder cost for the $l$-th item, respectively.

To  illustrate our approach in a higher dimensional setting, we consider twenty items, i.e., $d = 20$.

We consider the following parameters: $h_1=\ldots =h_{10}=2, h_{11}=\ldots=h_{20}=4$, $b_1=\ldots =b_{10}=4$; and $b_{11}=\ldots=b_{20}=2$.
The demands for the twenty items are assumed to follow a mixture of three multivariate normal distributions $\mathcal{N}_{20}(\boldsymbol{\mu}_1,\Sigma_1
),$ $\;\mathcal{N}_{20}(\boldsymbol{\mu}_2,\Sigma_2 ),\;$ and
$\mathcal{N}_{20}(\boldsymbol{\mu}_3,\Sigma_3 ),\;$ where
$\boldsymbol{\mu}_1=[3,\ldots,3] \in \mathbb{R}^{20},\ \Sigma_1=\textrm{diag}(1,\ldots,1) \in \mathbb{R}^{20\times 20}$;
$\boldsymbol{\mu}_2=[5,\ldots,5]\in \mathbb{R}^{20},\ \Sigma_2=\textrm{diag}(0.5,\ldots,0.5)\in \mathbb{R}^{20\times 20}$; and
$\boldsymbol{\mu}_3=[7,\ldots,7] \in \mathbb{R}^{20},\ \Sigma_3=\textrm{diag}(0.1,\ldots,0.1)\in \mathbb{R}^{20\times 20}$. The weights of the mixture are $\omega_1=0.1,\; \omega_2=0.65$ and $\omega_3=0.25$, respectively. Furthermore, the mixture has been truncated on the hypercube $[0,10]^{20}$.

The values we have used for the parameters $\varepsilon$ and $\rho$ in DROC and DROW are collated in Table~\ref{tab:radios_multiitem20}.

\begin{table}[ht!]
\caption{Multi-item newsvendor problem: Values for parameters $\varepsilon$, $\rho$ in DROC and $\rho$ in  DROW}
\label{tab:radios_multiitem20}
\centering
\setlength\tabcolsep{3pt} 
\begin{tabular}{ccccc}
\hline
\multirow{2}{*}{$N$} &  \multicolumn{2}{c}{DROC}& DROW \\
\cline{2-3}
 &  $\varepsilon$ & $\rho$ & $\rho$ &\\
\hline
2  &  5 &  2 &  60&  \\
5  &  5 &  2 & 50&  \\
10  &  4.5 &  1.5&  40&  \\
20  &  4 &  1&  20&  \\
50  &  2.5&  0.6&  10&  \\
100  &  1.75 &  0.5&  8&  \\
200  &  1.25 &  0.35&  4&  \\
\hline
\end{tabular}
\end{table}

Again, for a meaningful and fair comparison, these parameters have been tuned by trial and error in such a way that at most 50 out of the 1000 runs we have carried out for each sample size $N$ yield a positive out-of-sample disappointment. The values for the parameters, which we need to this end, diminish as we gain more information (i.e., as the sample size $N$ grows). Note that, for small sample sizes, for which the available data provide very little information about their true distribution, a great deal of robustness is required to produce highly reliable data-driven solutions. Consequently, it is little wonder that the selected values for $\rho$ in DROC are equal to two, which is the maximum value that the total variation distance between $P$ and $\widehat{P}$ can take on.
\begin{figure}[htp]
\centering

\subfloat[Out-of-sample disappointment]{%
  \includegraphics[width=0.5\textwidth]{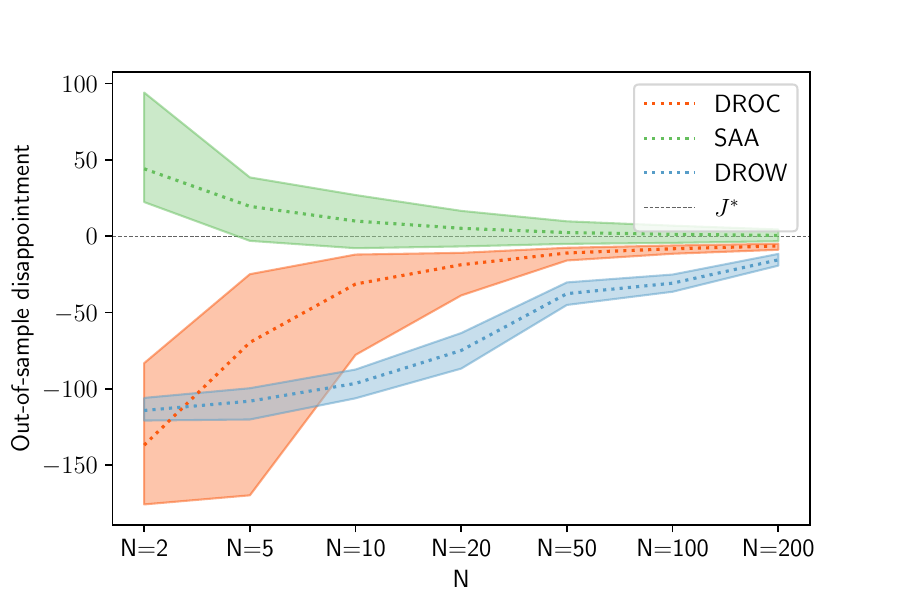}%
  \label{out-of-sample_multiitem_ray}
}%
\subfloat[Actual expected cost]{%
  \includegraphics[width=0.5\textwidth]{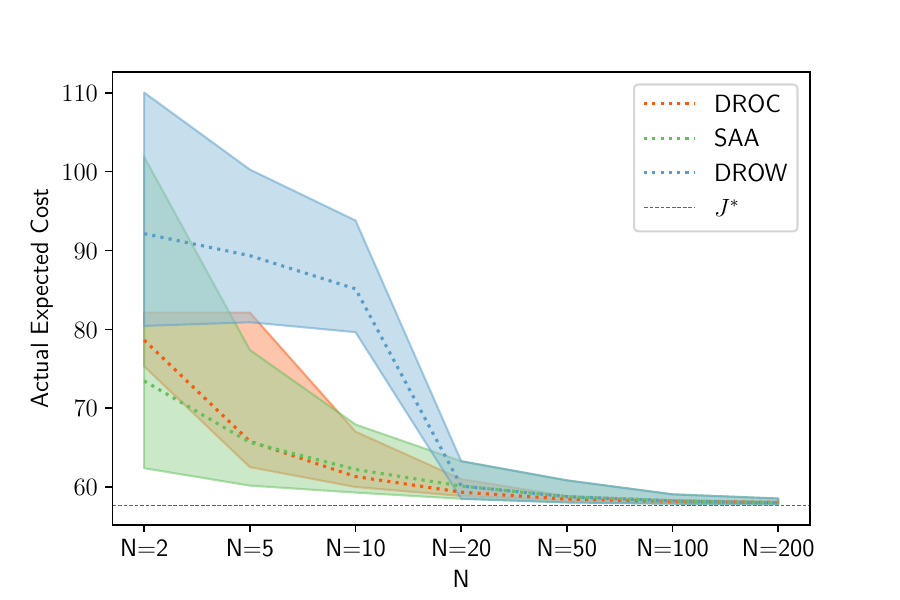}%
  \label{actual_expected_cost_multiitem_ray}
}
\vspace{5mm}

\caption{Multi-item newsvendor problem: Performance metrics}

\end{figure}
In the same fashion as in the case of the previous example of the single-item newsvendor problem, Figures~ \ref{out-of-sample_multiitem_ray} and \ref{actual_expected_cost_multiitem_ray} show, for various sample sizes, the box plots pertaining to the  out-of-sample disappointment and the actual expected cost associated with each of the considered data-driven approaches, in that order. The results conveyed by these figures confirm our initial conclusions: The ability of our approach DROC to exploit a-priori knowledge of the order among some partition probabilities permits identifying solutions that perform noticeably better out of sample with the same level of confidence. We underline that, in terms of the out-of-sample disappointment, the decision maker seeks a data-driven method $m$ that renders an estimate $\widehat{J}_{N}^{m}$ that results in a positive surprise (i.e., negative disappointment) with a high probability, but that is as close as possible to the cost with full information $J^*$. Consequently, the large  negative out-of-sample disappointment that the solutions given by DROW feature can be attributed to its over-conservativeness.

In terms of computational time, solving DROC for this instance of the multi-item newsvendor problem, with 20 items, four partitions and a sample size of 200, takes less than a second with CPLEX 12.10 running on a Windows 10 PC with a CPU Intel (R) Core i7-8550U clocking at 1.80 GHz and 8 GB of RAM.

\subsection{The problem of a strategic firm competing \`a la Cournot in a market}

Next we consider the problem of a strategic firm competing \`a la Cournot in a market for an undifferentiated product. This could be the case of, for instance, the electricity market (see, e.g., \cite[Ch. 3]{gabriel2012complementarity} and  \cite{Ruiz2008}). Suppose the firm can produce up to one per-unit amount of product at a cost given by $a_{2} x^{2} + a_{1} x + a_{0}$, where $x$ is the per-unit amount of product eventually produced and $a_{0}, a_{1}$ and $a_{2}$ are \emph{known} parameters taking values in $\mathbb{R}^+$. Furthermore, assume an inverse residual demand function in the form $\lambda = \alpha-\beta x$, where $\lambda$ is the market clearing price for the product, and $\alpha, \beta \in \mathbb{R}^+$ are \emph{unknown and uncertain} parameters. The firm seeks, therefore, to minimize its cost $(a_{2} x^{2} + a_{1} x + a_{0})-\lambda x$ subject to $x \in[0,1]$. After some basic manipulation, the problem of the firm can be posed as
$$\inf_{x \in [0,1]} \mathbb{E}_Q [(-x)\xi+x^2]$$
where $\xi = \frac{\alpha-a_1}{\beta +a_2}$.

The most interesting feature of this example is that, unlike in the aforementioned newsvendor problems, the Lipschitz constant of the objective function $f(x,\xi):= (-x)\xi+x^2$ with respect
to $\boldsymbol{\xi}$ is dependent  on  the  decision  variable $\mathbf{x}$.

We consider that $\xi$ follows a (true) probability distribution given by 10\,000 points sampled from a mixture of three Gaussian distributions with variances all equal to $0.3$ and means $\mu_1=0,\mu_2=1.2$ and $\mu_3=2.5$. The weights of the mixture are $\omega_1=0.5,\omega_2=0.2$ and $\omega = 30.3$. Furthermore, the mixture has been truncated (over the interval $[-1.8,3]$. Figure~\ref{muestra_productor_ordercone} plots the kernel estimate of the data-generating distribution.

As in the previous experiments, we have divided the support $[-1.8,3]$ into four partitions, using the procedure described at the beginning of Section~\ref{numerics}. However, in a different way to what we did in  the newsvendor examples, here we select parameters $\varepsilon$ and $\rho$ following a procedure that solely relies on the available data, similarly to what is done in \cite{MohajerinEsfahani2018}. Essentially, given a desired confidence level $(1-\beta)$ for the finite-sample guarantee (set to 0.85 in our numerical experiments), we need to \emph{estimate}, using the data sample available only,  the parameters $\varepsilon$ and $\rho$ that deliver, at least, this confidence level while yielding the best out-of-sample performance. To this end, we use bootstrapping. The estimator of those parameters is denoted as  $param_{N}^m(\beta)$, underlining that the number and type of parameters to be estimated depend on the method $m$. The estimation procedure is carried out as follows for each sample of size $N$ (in this experiment, we consider $300$ independent data samples for each size $N$):
\begin{enumerate}
    \item  We construct $kboot$ resamples of size $N$ (with replacement), each playing the role of a different training dataset. Moreover, take those data points that have not been resampled to form a validation dataset (one per resample of size $N$). In our experiments below, we have considered $kboot = 50$.

    \item For each resample $k=1,\ldots,kboot$ and each candidate value for $param$, get a DRO solution from method $j$ with parameter (or pair of paramaters) $param$ on the $k$-th resample. The resulting optimal decision  is denoted as $\widehat{x}^{j,k}_N(param)$ and its associated objective value as $\widehat{J}^{j,k}_N(param)$. Subsequently, we compute the out-of-sample performance $J(\widehat{x}^{j,k}_N(param))$  of the data-driven solution $\widehat{x}^{j,k}_N(param)$ over the  $k$-th validation dataset.
\item From among the candidate values for $param$ such that $\widehat{J}^{j,k}_N(param)$
exceeds the value $J(\widehat{x}^{j,k}_N(param))$
in at least
$(1-\beta)\times kboot$ different resamples, take the one with the lowest $\frac{\sum_{k=1}^{kboot}{J(\widehat{x}^{j,k}_N(param))}}{kboot}$ (that is, with the highest out-of-sample performance averaged over the $kboot$ resamples).
\item Finally, compute the solution given by method $j$ with parameter $param^{\beta,j}_N$, $\widehat{x}^j_N:=\widehat{x}^{j}_N(param^{\beta,j}_N)$ and the respective certificate
$\widehat{J}^j_N:=\widehat{J}^{j}_N(param^{\beta,j}_N)$.
\end{enumerate}

\begin{figure}[htp]
\centering

\subfloat[Data generating distribution (kernel density estimate)]{%
  \includegraphics[width=0.5\textwidth]{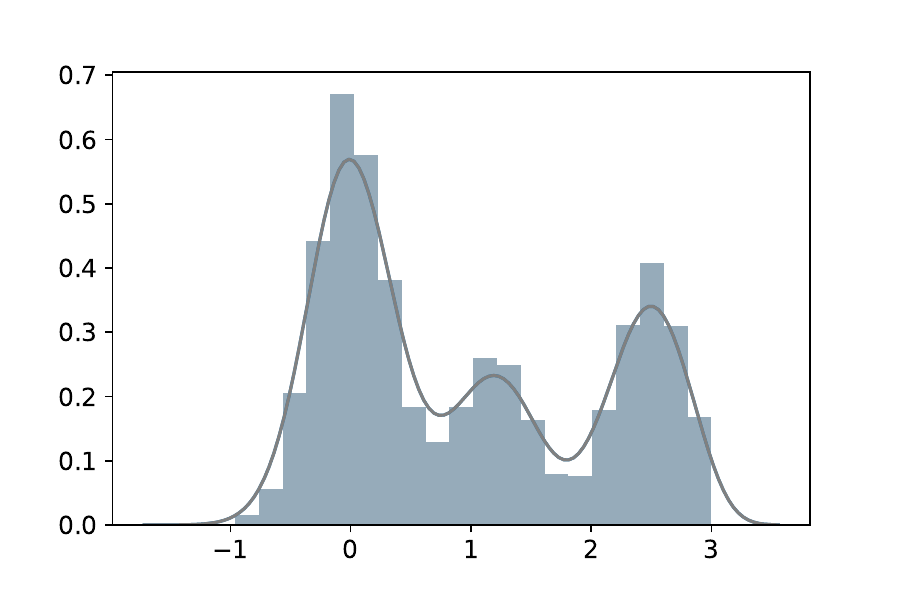}%
  \label{muestra_productor_ordercone}
}%
\subfloat[Optimal solution]{%
  \includegraphics[width=0.5\textwidth]{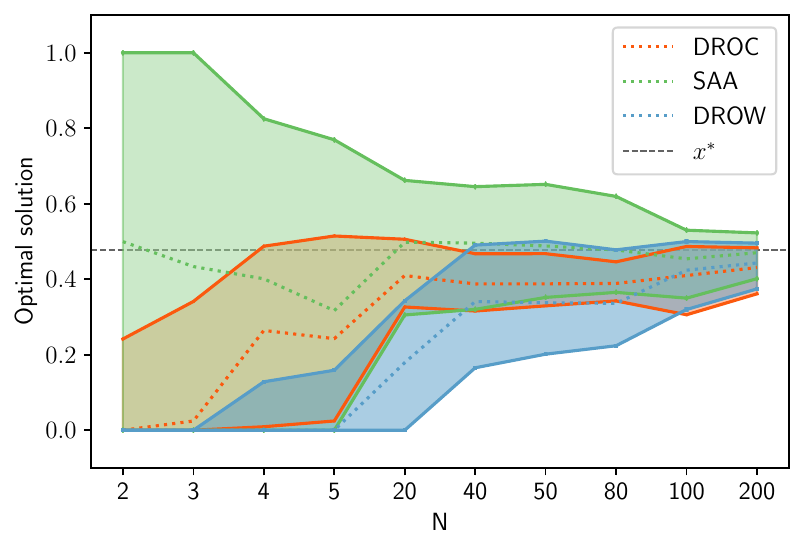}%
  \label{xsol_univariate_ray_prod_cone}
}

\subfloat[Out-of-sample disappointment]{%
  \includegraphics[width=0.5\textwidth]{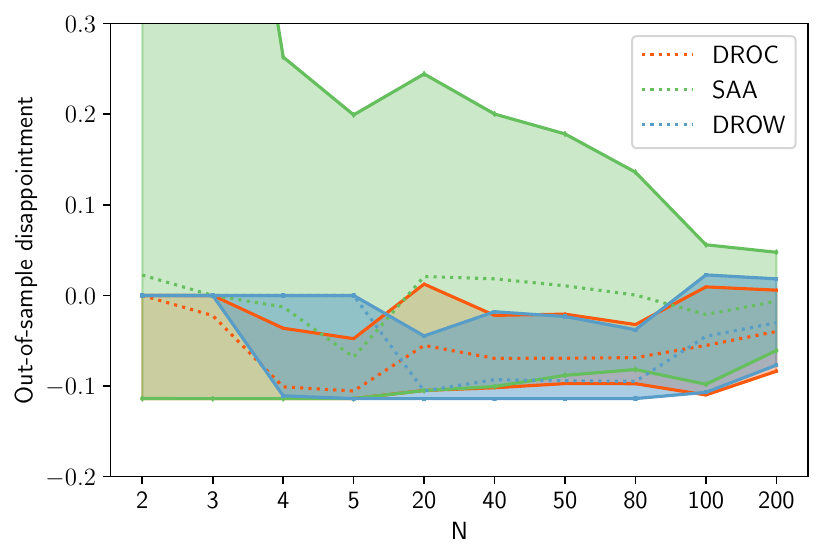}%
  \label{out-of-sample_univariate_ray_prod_cone}
}%
\subfloat[Actual expected cost]{%
  \includegraphics[width=0.5\textwidth]{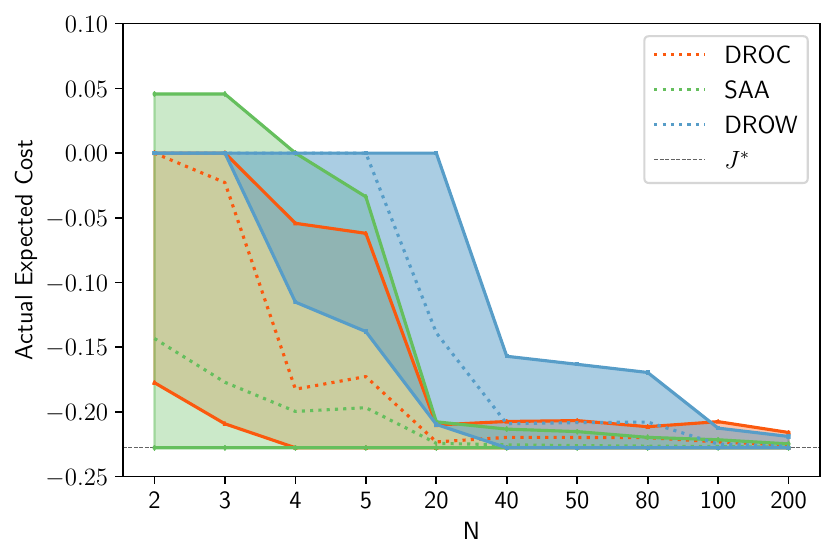}%
  \label{actual_expected_cost_univariate_ray_prod_cone}
}

\vspace{5mm}

\caption{Strategic firm problem: (Approximate) true data-generating distribution, optimal solution and performance metrics}

\end{figure}

As for the newsvendor examples, Figures~\ref{xsol_univariate_ray_prod_cone},
\ref{out-of-sample_univariate_ray_prod_cone},
and \ref{actual_expected_cost_univariate_ray_prod_cone} show, for various sample sizes, the box plots pertaining to the optimal decision, the out-of-sample disappointment and the actual expected cost associated
with each of the considered data-driven approaches, in that order. Once again, the results conveyed by these figures confirm our previous  conclusions: Our approach DROC is able to leverage a-priori knowledge of the order among some partition
probabilities to deliver solutions that perform significantly
better out of sample for the same level of
confidence. Furthermore, we see that the decision computed by the proposed
method DROC converges to the true optimal solution (with complete information) faster than the solutions provided by the other methods.

\section{Conclusions}\label{conclusions}
In this paper, we have presented a novel framework for data-driven distributionally robust optimization (DRO) based on optimal transport theory in combination with order cone constraints to leverage \emph{a-priori} information on the true data-generating distribution.
Motivated by the reported  over-conservativeness of the traditional DRO approach based on the Wasserstein metric, we have formulated an ambiguity set able to incorporate information about the order among the probabilities that the true distribution of the problem's uncertain  parameters assigns to some subregions of its support set. Our approach can accomodate a wide range of shape information (such as that related to monotonicity or multi-modality) in a practical and intuitive way. Moreover, under mild assumptions, the resulting distributionally
robust  optimization  problem  can  be,  in  fact,  reformulated  as  a  finite  convex  problem  where  the
a-priori
information (expressed through the order cone constraints) are cast as linear constraints as opposed to the more computationally challenging
formulations that exist in the literature. Furthermore, our approach is supported by theoretical performance guarantees and is capable of turning the provided information into solutions with increased reliability and improved performance, as illustrated by the numerical experiments we have prepared based on the well-known newsvendor problem and the problem of a strategic firm competing \'a la Cournot in a market for a homogeneous product.

\section*{Compliance with ethical standards}

\textbf{Funding}\hspace{3mm} This research has received funding from the European Research Council (ERC) under the European Union's Horizon 2020 research and innovation programme (grant agreement no. 755705). This work was also supported in part by the Spanish Ministry of Economy, Industry and Competitiveness and the European Regional Development Fund (ERDF) through project ENE2017-83775-P.\\

\noindent \textbf{Conflict of interest}\hspace{3mm} The authors declare that they have no conflict of interest.


%
%


%
%

%
%

\end{document}